\definecolor{black}{rgb}{0.0,0.0,0.0}
\definecolor{white}{rgb}{1.0,1.0,1.0}
\definecolor{vertfonce}{rgb}{0.20, 0.46, 0.25}
\definecolor{rougefonce}{rgb}{0.64, 0.09, 0.20}
\definecolor{bleu}{rgb}{0.0,0.1,1.0}
\newcommand{\souligner}[1]{\textbf{#1}}
\newcommand{\LastUpdate}{\eurtoday\ at \thistime}
\begin{document}
\begin{center}
\textbf{\large{\textsf{Type transition of  simple random walks\\
 on  randomly directed  regular lattices}}\footnote{Supported in part by the Italian national PRIN project \textit{Random fields, percolation, and stochastic evolution of systems with many components} and by the  \textit{Actions Internationales} programme of the \textit{Universit\'e de Rennes 1}. M.C.\ acknowledges support from G.N.A.M.P.A. This work has been completed while D.P.\ was on  sabbatical leave from his host university at the \textit{Institut Henri Poincar\'e}. \\
\textit{2010 Mathematics Subject Classification:}
60J10, 60K15\\
\textit{Key words and phrases:}
Markov chain, random environment, recurrence criteria, random graphs, directed graphs.\\
}}
\vskip1cm
\parbox[t]{14cm}{
Massimo {\sc  Campanino}$^{a}$ and Dimitri {\sc Petritis}$^b$\\
\vskip5mm   
{\scriptsize  
\baselineskip=5mm
a. Dipartimento di Matematica, Università degli Studi di Bologna,\\
piazza di Porta San Donato 5, I-40126 Bologna, Italy, massimo.campanino@unibo.it
\vskip5mm
b. Institut de Recherche
Math\'ematique, Universit\'e de Rennes I and CNRS UMR 6625\\
Campus de Beaulieu, F-35042 Rennes Cedex, France, dimitri.petritis@univ-rennes1.fr}\\
\vskip1cm
{\small
\centerline{\LastUpdate}
\vskip1cm
\baselineskip=5mm
{\bf Abstract:}
Simple random walks on a partially directed version of $\BbZ^2$ are considered. More precisely, vertical edges between neighbouring vertices of $\BbZ^2$ can be traversed in both directions (they are undirected) while horizontal edges are one-way. The horizontal orientation is prescribed by a random perturbation of a periodic function, the perturbation probability  decays according to a power law in the absolute value of the ordinate.    
We study the type of the simple random walk, i.e.\ its being recurrent or transient, and show that there exists a critical value of the decay power, above which it is almost surely recurrent and below which it is almost surely transient.
}}
\end{center}


\section{Introduction}
\subsection{Motivations}

We study simple symmetric random walks (i.e.\ jumping with uniform probability to one of the available neighbours of a given vertex) on partially directed regular sublattices of $\BbZ^2$ obtained from $\BbZ^2$ by imposing horizontal lines to be uni-directional.   
Although random walks on partially directed lattices have been introduced long-time ago to study the hydrodynamic dispersion of a tracer particle in a porous medium \cite{MatheronMarsily1980} very little was known on them beyond some computer simulation heuristics \cite{Redner1997}. Therefore,  it arose as a surprise for us that so little was rigourously known when we first considered simple random walks on partially directed 2-dimensional lattices in \cite{CamPet-rwrol,CamPet-wien}. In those papers, we determined the type of simple random walks on  lattices obtained from $\BbZ^2$ by keeping vertical edges bi-directional while horizontal edges become one-way. Depending on how  the horizontal allowed direction to the left or the right is determined we obtain dramatically different behaviour \cite[Theorems 1.6, 1.7, and 1.8]{CamPet-rwrol} (these results are  reproduced --- for completeness --- as theorem \ref{thm:old-rwrol} in the present paper).

This result triggered several developments by various authors. In \cite{Guillotin-PlantardLe2007},  the orientation is chosen by means of a correlated sequence or by a dynamical system; in both cases, provided that some  variance condition holds, almost sure transience is established and, in \cite{Guillotin-PlantardLe2008}, a functional limit theorem is obtained. In \cite{Pene2009}, the case of orientations chosen according to a stationary sequence is treated. In \cite{Pete2008}, our results of \cite{CamPet-rwrol,CamPet-wien} are used to study corner percolation on $\BbZ^2$. In \cite{Loynes2012a}, the Martin boundary of these walks has been studied for the models that are transient and proved to be trivial, i.e.\ the only positive harmonic functions for the Markov kernel of these walks are the constants. In \cite{DevulderP`ene2013} a model where the horizontal directions are chosen according to an arbitrary (deterministic or random) sequence but the probability of performing a horizontal or vertical move is not determined by the degree but by a sequence of non-degenerate random variables is considered and shown to be a.s.\ transient.

It is worth noting that all the previous directed lattices are regular in the sense that both the inward and the outward degrees are constant (and equal to 3) all over the lattice. Therefore, the dramatic change of type is due only to the nature of the directedness. However, the type result was always either recurrent or transient. The present paper provides an example where the type of the random walk is determined through the tuning of  a parameter controlling the overall number of defects; it improves thus the insight we have on  those non reversible random walks. Let us mention also that beyond their theoretical interest (a short list
of problems remaining open in the context of such random walks is given in the conclusion section), directed random walks are much more natural models of propagation on large networks like internet than reversible ones.


\subsection{Notation and definitions}

 \textit{Directed} versions of $\BbZ^2$ are obtained as follows: let $\bu=(u_1,u_2)$ and $\bv=(v_1, v_2)$ be arbitrary elements of $\BbZ^2$ and suppose  that a sequence of $\{-1,1\}$-valued variables $\bom{\varepsilon}=(\varepsilon_y)_{y\in\BbZ}$ is given. The pair $(\bu,\bv)\in\BbZ^2\times \BbZ^2$ is an allowed edge to the lattice if either $[u_2=u_1]\wedge [v_2=v_1\pm 1]$ or $[v_2=v_1] \wedge [u_2=u_1+\varepsilon_{v_1}]$. The directed sublattice of $\BbZ^2$ depends obviously of the choice of the sequence $\bom{\varepsilon}$; we denote this partially directed lattice by $\BbZ_{\bom{\varepsilon}}^2$. The choice of  $\bom{\varepsilon}$ can be deterministic or random and will be specified later.

\begin{defi}
A \souligner{simple random walk} on $\BbZ_{\bom{\varepsilon}}^2$
is a $\BbZ^2$-valued Markov chain 
$(\bM_n)_{n\in\BbN}$ with transition probability matrix $P$
having as  matrix elements
\[P(\bu,\bv)=\BbP(\bM_{n+1}=\bv|\bM_n=\bu)=
\left\{\begin{array}{ll}
\frac{1}{3} & \textrm{if }\ \ (\bu,\bv) \textrm{ is an allowed edge of } \BbZ_{\bom{\varepsilon}}^2,\\
0 & \textrm{otherwise.}
\end{array}\right.\]
\end{defi}

\begin{remn} The Markov chain 
$(\bM_n)_{n\in\BbN}$ \textit{cannot be reversible}. Therefore, all the powerful techniques
based on the analogy with electrical circuits (see \cite{DoyleSnell1984,Soardi1994} for modern exposition) or 
spectral properties of graph Laplacians\footnote{Connections of graph Laplacians with electric circuits is at least as old as reference \cite{Weyl1923}, connections with modern cohomology can be found in \cite{Soardi1994,Woess2000,JorgensenPearse2008} but again ideas are much older \cite{Flanders1971b,Flanders1972} and references therein.} \cite{Biggs1974,CvetkovicDoobSachs1995,Chung1997} do not apply.  
\end{remn}

Several $\bom{\varepsilon}$-horizontally directed lattices have been introduced in \cite{CamPet-rwrol},  where the following theorem has been established.
\begin{theo} {\cite[see theorems 1.6, 1.7, and 1.8]{CamPet-rwrol}}
\label{thm:old-rwrol}
Consider a $\BbZ^2_{\bom{\varepsilon}}$ directed lattice. 
\begin{enumeratec}
\item If the lattice is alternatively directed, i.e.\ $\varepsilon_y =(-1)^y$, for $y\in \BbZ$, then the simple random walk on it is recurrent.
\item If the lattice has directed half-planes i.e.\ $\varepsilon_y=-\id_{]-\infty, 0[}(y) + \id_{[0,\infty[}(y)$, 
 then the simple random walk on it is transient.
 \item If  $\bom{\varepsilon}$ is a sequence of $\{-1,1\}$-valued random variables, independent and identically distributed with uniform probability, the simple random walk on it is transient for
 almost all possible choices of the horizontal directions.
 \end{enumeratec}
 \end{theo}
 
 Notice that the above simple random walks are defined on topologically non-trivial directed graphs in the sense that $\lim_{N\rightarrow\infty}\frac{1}{N}\sum_{y=-N}^N \varepsilon_y=0.$
For the two first cases, this is shown by a simple calculation
and for the third case this is an almost sure statement
stemming from the independence of the sequence $\bom{\varepsilon}$.
The above condition guarantees that  transience is not a trivial consequence of a non-zero drift but an intrinsic property of the walk in spite of  its jumps being
statistically symmetric.


\subsection{Results}

In this paper, we consider again a $\BbZ^2_{\bom{\varepsilon}}$ lattice but the sequence $\bom{\varepsilon}$ is specified as follows.

\begin{defi}{}
Let $f:\BbZ\rightarrow \{-1,1\}$ be a $Q$-periodic function with some even integer $Q\geq 2$ verifying $\sum_{y=1}^Q f(y)=0$ and $\bom{\rho}=(\rho_y)_{y\in\BbZ}$ a Rademacher
sequence, i.e.\ 	a sequence  of independent and identically distributed $\{-1,1\}$-valued random variables having uniform distribution on $\{-1,1\}$.
Let $\bom{\lambda}=(\lambda_y)_{y\in\BbZ}$
be a $\{0,1\}$-valued sequence of independent random variables and independent of $\bom{\rho}$ and suppose there exist constants $\beta$ (and $c$) such that 
$\BbP(\lambda_y=1)=\frac{c}{|y|^\beta}$ for large $|y|$. We define the horizontal orientations $\bom{\varepsilon}=(\varepsilon_y)_{y\in\BbZ}$ through $\varepsilon_y= (1-\lambda_y)f(y)+\lambda_y \rho_y$. Then the $\BbZ^2_{\bom{\varepsilon}}$-directed lattice defined above is termed a \textbf{randomly horizontally directed lattice with randomness decaying in power $\beta$}.
\end{defi} 
\begin{theo}
\label{thm:main}
Consider the horizontally directed lattice $\BbZ^2_{\bom{\varepsilon}}$ with randomness decaying in power $\beta$. 
\begin{enumeratec}
\item If $\beta <1$ then the simple random walk is transient for almost all realisations of the sequence $(\lambda_y, \rho_y)$.
\item If $\beta >1$ then the simple random walk is recurrent for almost all realisations of the sequence $(\lambda_y, \rho_y)$.
\end{enumeratec}
\end{theo}

\begin{remn}
It is worth noting that the periodicity of the function $f$ is required only to prove recurrence; for proving transience, any function $f$ can be used. 
\end{remn}
\begin{remn}
 In the previous model, the levels $y$ where $\lambda_y\ne 0$, can be viewed as random defects perturbing a periodically directed model whose horizontal directions are determined by the periodic function $f$. Thus, it is natural to consider the random variable $\|\bom{\lambda}\|:=\|\bom{\lambda}\|_1=\card\{y\in\BbZ: \lambda_y=1\}$ as the \textbf{strength} of the perturbation.  
When $\beta>1$, by Borel-Cantelli lemma, $\|\bom{\lambda}\|<\infty$ a.s., meaning that there are a.s.\ finitely many levels $y$ where the horizontal direction is randomly perturbed with respect to the direction determined by the periodic function;
if $\beta<1$, then $\|\bom{\lambda}\|=\infty$ a.s.  
\end{remn}
An extreme choice of ``random'' perturbation is when $\bom{\lambda}$ is a deterministic $\{0,1\}$-valued sequence. We have then the following 

\begin{prop}
\label{pro:deterministic}
When  $\bom{\lambda}$ is a deterministic $\{0,1\}$-valued sequence with $\|\bom{\lambda}\|<\infty$, then  the simple random walk is recurrent.
\end{prop}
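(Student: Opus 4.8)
The plan is to prove recurrence through the criterion $\sum_n \BbP(\bM_n=(0,0)\mid \bM_0=(0,0))=\infty$, by showing that both coordinates of $\bM_n=(X_n,Y_n)$ are diffusive, the finitely many defect levels contributing only a lower-order correction to the fully periodic walk. I would first record that the ordinate is, on its own, a lazy symmetric walk on $\BbZ$: at each step $\bM$ moves vertically with probability $2/3$ (up or down with equal chance) and horizontally with probability $1/3$. Hence $(Y_n)$ is recurrent, and if $(\tilde Y_j)_{j}$ denotes the embedded simple symmetric walk obtained by watching only the vertical moves, its local times $L_y:=\card\{j\le n_v:\tilde Y_j=y\}$ (with $n_v\asymp\tfrac{2}{3}n$ the number of vertical steps up to time $n$) are of order $\sqrt n$ over a range of order $\sqrt n$. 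All the difficulty lies in the abscissa $X_n$, whose increments are slaved to the current ordinate through $\bom\varepsilon$.

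Next I would set up the standard decomposition of the horizontal displacement. Writing $\xi_j$ for the number of horizontal steps made during the $j$-th horizontal sojourn (these are i.i.d.\ geometric of mean $\mu=\tfrac12$, conditionally on $(\tilde Y_j)$) and $S_y=\sum_{j:\tilde Y_j=y}\xi_j$ for the total number of horizontal steps performed at level $y$, one has $X_n=\sum_y\varepsilon_y S_y$. Conditionally on the vertical walk the $S_y$ are independent, so the conditional mean of $X_n$ given $(\tilde Y_j)$ equals $\mu\sum_y\varepsilon_y L_y$ while its conditional variance equals $\sigma^2\sum_y L_y=\sigma^2 n_v=O(n)$. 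The centred part of $X_n$ is thus automatically diffusive, of order $\sqrt n$, and recurrence hinges solely on controlling the drift-like term $\Delta_n:=\sum_y\varepsilon_y L_y$.

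The crux is to show $\Delta_n=O(\sqrt n)$. I would split $\varepsilon_y=f(y)+(\varepsilon_y-f(y))$. Because $\bom\lambda$ is deterministic with $\|\bom\lambda\|<\infty$, the correction $\varepsilon_y-f(y)$ is supported on the finite set $F=\{y:\lambda_y=1\}$ and bounded by $2$; hence $\sum_{y\in F}(\varepsilon_y-f(y))L_y=O(\sqrt n)$ for free, each of the finitely many local times being of order $\sqrt n$. It then remains to bound the purely periodic term $\sum_y f(y)L_y$, which is precisely the estimate underlying the recurrence of the alternating lattice in Theorem \ref{thm:old-rwrol}(1) and is the genuine obstacle. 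The naive total-variation bound $|\sum_y f(y)L_y|\le\|f\|_\infty\,Q\sum_y|L_{y+1}-L_y|$ only gives $O(n^{3/4})$, which is too weak to yield recurrence. To reach $O(\sqrt n)$ one must combine the balance $\sum_{y=1}^Q f(y)=0$ --- which allows, on each block of $Q$ consecutive levels, replacing $L_y$ by its deviation from the left endpoint of the block --- with the asymptotic independence of local-time increments across distinct period-blocks, coming from the Markov (Ray--Knight) structure of the occupation field of $(\tilde Y_j)$. The per-block contributions are then centred, roughly independent, each of size $O(n^{1/4})$, and there are $O(\sqrt n)$ of them, so they accumulate to $O(\sqrt n)$ instead of $O(n^{3/4})$; this cancellation is what separates the recurrent periodic regime from the transient i.i.d.\ one.

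Once $\Delta_n=O(\sqrt n)$, so that $X_n=O(\sqrt n)$ in both conditional mean and conditional fluctuation, the pair $(X_n,Y_n)$ is genuinely two-dimensional diffusive: $n^{-1/2}(X_n,Y_n)$ converges to a non-degenerate centred Gaussian law (the conditional variance $\sigma^2 n_v$ keeps the abscissa non-degenerate given the ordinate), and a two-dimensional local limit theorem gives $\BbP(\bM_n=(0,0)\mid\bM_0=(0,0))\asymp c/n$. Summing over $n$ diverges, which is the recurrence criterion. I expect the main obstacle to be exactly the periodic estimate $\sum_y f(y)L_y=O(\sqrt n)$ together with the accompanying joint limit law; the deterministic finite defects are genuinely harmless and need no idea beyond the trivial bound above.
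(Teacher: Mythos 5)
Your overall strategy --- reduce to the Green's-function criterion, observe that the finitely many defect levels contribute only $O(\sqrt n)$ to the conditional drift $\sum_y\varepsilon_yL_y$ because each of the finitely many local times involved is $O(\sqrt n)$, and then control the purely periodic part --- is sound in outline and correctly locates the difficulty. It is, however, a genuinely different route from the paper's, and the step you yourself call the genuine obstacle, namely
\[
\sum_y f(y)L_y=O(\sqrt n)
\]
(in the sense needed here: with probability bounded away from $0$ uniformly in $n$, jointly with the return of the vertical walk to the origin), is asserted rather than proved. The Ray--Knight/block-martingale heuristic you give --- $O(\sqrt n)$ period-blocks, each centred by $\sum_{y=1}^Qf(y)=0$ and of size $O(n^{1/4})$, ``roughly independent'' --- is the right intuition, but turning it into a proof for a general balanced $Q$-periodic $f$ (not only the alternating case $Q=2$ of theorem \ref{thm:old-rwrol}), under the conditioning $\{Y_{2n}=0\}$, and then feeding the resulting bound into a conditional local limit theorem for the non-identically distributed sum $\sum_y\varepsilon_yS_y$, is precisely where all the work lies. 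As written, your argument has a gap exactly at its crux.

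The paper's proof of proposition \ref{pro:deterministic} is a one-line reduction to the recurrence argument of theorem \ref{thm:main}: since $\|\bom{\lambda}\|<\infty$ one may choose a deterministic $L$ with $LQ$ exceeding the support of $\bom{\lambda}$, and that argument avoids your drift estimate altogether. It chops the vertical walk into excursions $[\tau_k,\tau_{k+1}]$ across strips of width $Q$; proposition \ref{pro:vanishing-a}, via the modified reflection principle of lemma \ref{lem:N} and the centring of $f$, shows that every excursion lying outside the window $[-LQ,LQ]$ has conditional mean horizontal displacement \emph{exactly} zero. The horizontal displacement then splits into an i.i.d.\ centred sum over the at least $2n-d\sqrt{n}$ outer excursions, to which a one-dimensional local limit theorem applies and yields the factor $c/\sqrt n$, plus a sum over the $O(\sqrt n)$ inner excursions (controlled by lemma \ref{lem:varpi} and the law of large numbers), which is $O(\sqrt n)$ and is absorbed into the Gaussian window of that local limit theorem. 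This buys exact cancellation per excursion and a genuine i.i.d.\ structure at the price of some bookkeeping over constrained admissible paths; your route, if the periodic drift estimate were supplied, would buy a cleaner ``both coordinates are diffusive'' picture but requires the heavier occupation-field analysis. To repair your proof you must either actually establish the $O(\sqrt n)$ bound on $\sum_yf(y)L_y$ for general $Q$, or switch to the excursion decomposition.
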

Note however that the previous proposition does not provide us with a  necessary condition  for recurrence. We shall give  in \S\ref{sec:open} the following 
\begin{coex}{}There are deterministic $\{0,1\}$-sequences $\bom{\lambda}$, with $\|\bom{\lambda}\|=\infty$ (infinitely many deterministic defects), leading nevertheless to recurrent random walks.
\end{coex}


\section{Technical preliminaries}
\label{sec:technical}
Since the general framework  developed in \cite{CamPet-rwrol} is still useful, we only recall here the basic facts. 
It is always possible to choose a sufficiently large abstract probability space
$(\Omega,\cA,\BbP)$ on which are defined  
all the sequences of random variables we shall use, namely $(\rho_y), (\lambda_y)$, etc.\  and in particular the Markov chain $(\bM_n)_{n\in\BbN}$ itself. 
When the  initial probability of the chain is $\nu$, then obviously $\BbP:=\BbP_\nu$. When $\nu=\delta_x$ we write simply $\BbP_x$ instead of $\BbP_{\delta_x}$.

The idea of the proof is to decompose  
the stochastic process $(\bM_n)_{n\in\BbN}$ into a vertical skeleton --- obtained by the vertical projection of $(\bM_n)$ that is stripped out of the waiting times corresponding to the horizontal moves ---
and a horizontal
component. More precisely,
define $T_0:=0$ and for $k\geq 1$ recursively: $T_k=\inf\{n>T_{k-1}: \scalar{\bM_n-\bM_{n-1}}{\be_2}\ne 0\}$. Introduce then the sequences $\psi_k=\scalar{\bM_{T_k}-\bM_{T_{k-1}}}{\be_2}$ for $k\geq 1$, and $Y_n=\sum_{k=1}^n \psi_k$, for $n\geq 0$ (with the convention $Y_0=0$). The process $(Y_n)$ is a simple random walk on the vertical axis, called the \souligner{vertical skeleton}; its occupation measure of level $\{y\}$ is denoted $\eta_n(y)=\sum_{k=0}^n \id_{\{y\}} (Y_k)$.
Similarly, we define the sequences of  waiting times. For all $y\in\BbZ$ define $S_0(y):=-1$ and recursively for $k\geq 1$: $S_k(y)=\inf\{l>S_{k-1}(y): Y_l=y\}$. The random variables 
$\xi_k^{(y)}= T_{S_k(y)+1}-T_{S_k(y)}-1$ represent then the waiting time at level $y$ during the $k^\textrm{th}$ visit at that level. 
Due to strong Markov property, the doubly infinite sequence $(\xi_k^{(y)})_{y\in\BbZ, k\in\BbN^*}$ are independent $\BbN$-valued random variables with geometric distribution of parameter $p=1/3$; $q$ always stands for $1-p$ in the sequel.
\begin{defi}{}
Suppose the vertical 
skeleton and the environments of the orientations
are given. Let $(\xi^{(y)}_n)_{n\in\BbN, y\in\BbZ}$ be the previously defined doubly infinite sequence
of
geometric random variables of parameter $p=1/3$  and $\eta_n(y)$ the occupation measures of the vertical skeleton. We call 
\souligner{horizontally  embedded} random walk the process $(X_n)_{n\in\BbN}$ with
\[X_n  
=  \sum_{y\in\BbZ} \varepsilon_y \sum_{i=1}^{\eta_{n-1}(y)} \xi_i^{(y)},
\ \ n\in\BbN.\]
\end{defi}

\begin{lemm}{(See \cite[lemma 2.7]{CamPet-rwrol}).}
Let $T_n=n+\sum_{y\in\BbZ} \sum_{i=1}^{\eta_{n-1}(y)} \xi_i^{(y)}$
be the instant just after the random walk $(\bM_k)$ has performed
its $n^{\textrm{th}}$ vertical move (with the convention that the
sum $\sum_{i}$ vanishes whenever $\eta_{n-1}(y)=0$). Then
$\bM_{T_n}=(X_n,Y_n)$.
\end{lemm}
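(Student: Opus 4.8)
The assertion is a pathwise identity --- it holds for each fixed trajectory of $(\bM_k)$ and each fixed environment, with no probability involved --- so the plan is a bookkeeping argument by induction on $n$, carried out simultaneously for the three statements: the displayed formula for $T_n$, the vertical identity $\scalar{\bM_{T_n}}{\be_2}=Y_n$, and the horizontal identity that the first coordinate of $\bM_{T_n}$ equals $X_n$. For the base case $n=0$ one reads $T_0=0$ and, under the natural convention that $\eta_{-1}\equiv 0$ makes every sum empty, $X_0=0=Y_0$, so $\bM_{T_0}=\bM_0$ sits at the origin, as required.

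For the inductive step I would isolate the block of moves performed strictly between the times $T_{n-1}$ and $T_n$. By the very definition $T_n=\inf\{m>T_{n-1}:\scalar{\bM_m-\bM_{m-1}}{\be_2}\ne 0\}$, every move at a time in $\{T_{n-1}+1,\dots,T_n-1\}$ is horizontal (it leaves the second coordinate unchanged) and the single move at time $T_n$ is the $n$-th vertical move, of increment $\psi_n$; hence the second coordinate is constant on that block and jumps by $\psi_n$ at the end, giving $\scalar{\bM_{T_n}}{\be_2}=Y_{n-1}+\psi_n=Y_n$. The number of horizontal moves in the block is $w:=T_n-T_{n-1}-1$, all made at the single level $Y_{n-1}$. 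The crucial identification is that this block is the current visit to level $Y_{n-1}$: among the skeleton indices $0,\dots,n-1$, level $Y_{n-1}$ has been visited exactly $\eta_{n-1}(Y_{n-1})$ times and the last such visit is the index $n-1$ itself, i.e.\ $S_{\eta_{n-1}(Y_{n-1})}(Y_{n-1})=n-1$. Plugging this into the definition $\xi^{(y)}_k=T_{S_k(y)+1}-T_{S_k(y)}-1$ yields $w=\xi^{(Y_{n-1})}_{\eta_{n-1}(Y_{n-1})}$.

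It remains to propagate this into the horizontal and the time recursions. Since every horizontal edge at level $Y_{n-1}$ points in the direction $\varepsilon_{Y_{n-1}}$, each of the $w$ horizontal moves shifts the first coordinate by $\varepsilon_{Y_{n-1}}$, so that coordinate increases by $\varepsilon_{Y_{n-1}}\,w$ over the block. Using the elementary increment $\eta_{n-1}(y)=\eta_{n-2}(y)+\id_{\{Y_{n-1}\}}(y)$, the only inner sum that grows when passing from $\eta_{n-2}$ to $\eta_{n-1}$ is the one at $y=Y_{n-1}$, and the extra term it acquires is precisely $\xi^{(Y_{n-1})}_{\eta_{n-1}(Y_{n-1})}=w$; comparing with the definitions of $X_n$ and of the displayed $T_n$ then gives $X_n=X_{n-1}+\varepsilon_{Y_{n-1}}w$ and $T_n=T_{n-1}+w+1$, closing both recursions and completing the induction. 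The only genuinely delicate point, which I expect to be the main (if modest) obstacle, is this re-indexing: one must be scrupulous about the off-by-one conventions ($S_0(y)=-1$, $Y_0=0$, $\eta_{n-1}$ versus $\eta_n$) and verify that summing the waiting times grouped by level reproduces exactly the waiting times accumulated in temporal order, i.e.\ that $k\mapsto(Y_k,\ \textrm{rank of the visit at }k)$ is a bijection from $\{0,\dots,n-1\}$ onto $\{(y,i):1\le i\le\eta_{n-1}(y)\}$.
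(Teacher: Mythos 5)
Your argument is correct: the lemma is a pathwise bookkeeping identity, and your induction on $n$ --- isolating the block of $w=T_n-T_{n-1}-1$ horizontal moves at level $Y_{n-1}$, identifying $w$ with $\xi^{(Y_{n-1})}_{\eta_{n-1}(Y_{n-1})}$ via $S_{\eta_{n-1}(Y_{n-1})}(Y_{n-1})=n-1$, and closing the three recursions for $T_n$, $X_n$ and $Y_n$ --- is exactly the telescoping/re-indexing argument this identity requires, with the delicate point (the bijection $k\mapsto(Y_k,\textrm{rank of visit})$ reconciling the temporal and level-by-level groupings of the waiting times) correctly singled out and handled. The paper itself gives no proof, deferring to lemma 2.7 of the cited earlier work, so there is nothing to contrast with; your write-up is essentially the canonical proof.
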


Define $\sigma_0=0$ and recursively, for $n=1,2,\ldots$, 
$\sigma_n=\inf\{k>\sigma_{n-1}: Y_k=0\}>\sigma_{n-1}$,
the $n^{\textrm{th}}$ return to the origin for the vertical
skeleton. Then obviously, $\bM_{T_{\sigma_n}}=(X_{\sigma_n},0)$.
To study the recurrence or the transience of $(\bM_k)$, we must study
how often $\bM_k=(0,0)$. Now, $\bM_{T_k}=(0,0)$ if and only if $X_k=0$ and
$Y_k=0$. Since $(Y_k)$ is a simple random walk, the event
$\{Y_k=0\}$ is realised only at the instants $\sigma_n$, $n=0,1,2,\ldots$.

\begin{remn} The significance of the random variable $X_n$ is the
horizontal displacement after $n$ vertical moves of the skeleton $(Y_l)$.
Notice that the random walk $(X_n)$ has unbounded (although integrable)
increments. As a matter of fact, they are signed integer-valued geometric
random variables. Contrary to $(X_n)$, the increments of the process 
$(X_{\sigma_n})_{n\in\BbN}$, sampled at instants $\sigma_n$, are unbounded with heavy-tails.
 \end{remn}
 
 Recall that all random variables are defined on the same probability
space $(\Omega,\cA,\BbP)$; introduce the following sub-$\sigma$-algebras:
$\cH = \sigma(\xi_i^{(y)}; i\in\BbN^*, y\in\BbZ)$,
$\cG = \sigma(\rho_y, \lambda_y;  y\in\BbZ)$, and 
$\cF_n = \sigma(\psi_i; i=1,\ldots, n)$,
with $\cF\equiv\cF_\infty$.

\begin{lemm}{(See \cite[lemma 2.8]{CamPet-rwrol})}
\label{lem-return-of-M}
\[\sum_{l=0}^\infty \BbP(\bM_l=(0,0)| \cF\vee \cG)=
\sum_{n=0}^\infty \BbP(I(X_{\sigma_n},\varepsilon_0 \xi_0^0) \ni 0 | \cF\vee \cG),\]
where, $\xi_0^0$ has the same law as $\xi_1^{(0)}$ and, for $x\in\BbZ$,  $z\in \BbN$, and $\varepsilon=\pm1$,
$I(x,\varepsilon z)=\{x, \ldots, x+z\}$ if $\varepsilon =+1$ and
$\{x-z, \ldots, x\}$ if $\varepsilon=-1$.
\end{lemm}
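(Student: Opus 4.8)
The plan is to track the sojourns of the full chain $(\bM_l)$ at vertical level $0$ and to count, sojourn by sojourn, how many times the pair $(0,0)$ is actually hit. First I would observe that, by construction of the stopping times $T_n$, the vertical coordinate of $\bM_l$ is constant and equal to $Y_n$ on each time block $\{T_n,T_n+1,\ldots,T_{n+1}-1\}$, and that it equals $0$ precisely on those blocks for which $Y_n=0$, i.e.\ for $n\in\{\sigma_0,\sigma_1,\ldots\}$ with $\sigma_0=0$. Consequently the contributions to $\sum_l \id\{\bM_l=(0,0)\}$ split into disjoint pieces indexed by the successive returns $\sigma_m$ of the vertical skeleton to the origin, and no time $l$ is counted twice.

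Next I would analyse a single such block. On the block attached to $\sigma_m$ the chain sits at level $0$ and performs only horizontal steps, each of which shifts the abscissa by the fixed amount $\varepsilon_0$; the number of these steps is exactly the waiting time $\xi^{(0)}_{m+1}$ of the $(m+1)$-th visit of the skeleton to $0$ (recall $S_{m+1}(0)=\sigma_m$), starting from the abscissa $X_{\sigma_m}$ since $\bM_{T_{\sigma_m}}=(X_{\sigma_m},0)$. As all these moves go in the same direction, the set of abscissae visited during the block is the monotone discrete interval $I(X_{\sigma_m},\varepsilon_0\xi^{(0)}_{m+1})$, and every point of it is visited exactly once. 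Hence the number of visits to $(0,0)$ inside this block equals $\id\{0\in I(X_{\sigma_m},\varepsilon_0\xi^{(0)}_{m+1})\}$, and summing over $m$ and taking $\BbP(\,\cdot\mid\cF\vee\cG)$ (interchanging the conditional expectation with the nonnegative sum by conditional monotone convergence) gives
\[\sum_{l=0}^\infty \BbP(\bM_l=(0,0)\mid\cF\vee\cG)=\sum_{m=0}^\infty \BbP\bigl(0\in I(X_{\sigma_m},\varepsilon_0\xi^{(0)}_{m+1})\mid\cF\vee\cG\bigr).\]

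The step I expect to be the crux is justifying the replacement of the visit-dependent waiting time $\xi^{(0)}_{m+1}$ by a single generic copy $\xi_0^0$ with the law of $\xi_1^{(0)}$. For this I would check the measurability of $X_{\sigma_m}$: from its defining formula $X_{\sigma_m}=\sum_y \varepsilon_y\sum_{i=1}^{\eta_{\sigma_m-1}(y)}\xi_i^{(y)}$, the orientations $\varepsilon_y$ are $\cG$-measurable, the occupation numbers $\eta_{\sigma_m-1}(y)$ are $\cF$-measurable, and at level $0$ one has $\eta_{\sigma_m-1}(0)=m$, so only $\xi^{(0)}_1,\ldots,\xi^{(0)}_m$ enter, never $\xi^{(0)}_{m+1}$. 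Thus, conditionally on $\cF\vee\cG$, the variable $\xi^{(0)}_{m+1}$ is independent of $X_{\sigma_m}$ and carries the geometric$(p)$ law common to all the $\xi_i^{(y)}$; the conditional probability therefore depends on $\xi^{(0)}_{m+1}$ only through this law, and may be computed with any independent copy $\xi_0^0$ of it, yielding the stated identity. The only remaining care is bookkeeping with the excursion and visit indices (including the degenerate block $m=0$, where $X_{\sigma_0}=0$ so the term is $1$), together with the strong Markov property guaranteeing that the $\xi^{(y)}_k$ are i.i.d.\ geometric, which has already been recorded above.
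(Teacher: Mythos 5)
Your argument is correct and complete: the decomposition of $\sum_l \id\{\bM_l=(0,0)\}$ into the sojourn blocks $\{T_{\sigma_m},\dots,T_{\sigma_m+1}-1\}$, the identification of the visited abscissae with the monotone interval $I(X_{\sigma_m},\varepsilon_0\xi^{(0)}_{m+1})$, and the measurability check that $X_{\sigma_m}$ involves only $\xi^{(0)}_1,\dots,\xi^{(0)}_m$ (so that $\xi^{(0)}_{m+1}$ may be replaced by a generic independent copy $\xi_0^0$ conditionally on $\cF\vee\cG$) are exactly the points that need to be made. The paper itself does not reproduce a proof here --- it defers to lemma 2.8 of \cite{CamPet-rwrol} --- and your reconstruction follows the same sojourn-by-sojourn counting that that reference uses.
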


\begin{lemm}{(See \cite[lemma 2.9]{CamPet-rwrol})}
\label{lem-equivalence-MX}
\begin{enumeratec}
\item
If $\sum_{n=0}^\infty \BbP_0(X_{\sigma_n}=0|\cF\vee \cG)=\infty$ then 
$\sum_{l=0}^\infty \BbP(\bM_l=(0,0)| \cF\vee \cG)=\infty$.
\item
If $(X_{\sigma_n})_{n\in\BbN}$ is transient then $(M_n)_{n\in\BbN}$ is also transient.
\end{enumeratec}
\end{lemm}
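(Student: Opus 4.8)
The plan is to read both assertions off the exact identity of Lemma~\ref{lem-return-of-M}, which expresses the (conditional) occupation of $(0,0)$ by $(\bM_l)$ as a sum, over the returns $\sigma_n$ of the vertical skeleton, of the probabilities that the horizontal excursion issued from $(X_{\sigma_n},0)$ covers the origin, that is, that $0\in I(X_{\sigma_n},\varepsilon_0\xi)$ with $\xi$ a geometric waiting time that, conditionally on $\cF\vee\cG$, is independent of $X_{\sigma_n}$. The two statements then amount to a lower, respectively upper, bound on these interval--hitting probabilities in terms of the return probabilities of $(X_{\sigma_n})$.

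For the first assertion one only needs the pathwise inclusion $\{X_{\sigma_n}=0\}\subseteq\{0\in I(X_{\sigma_n},\varepsilon_0\xi)\}$, which holds because $0\in I(0,\varepsilon z)$ for every $z\ge 0$ and $\varepsilon=\pm1$. Conditioning on $\cF\vee\cG$ yields $\BbP(0\in I(X_{\sigma_n},\varepsilon_0\xi)|\cF\vee\cG)\ge\BbP_0(X_{\sigma_n}=0|\cF\vee\cG)$ term by term; summing over $n$ and invoking Lemma~\ref{lem-return-of-M} gives the divergence of $\sum_l\BbP(\bM_l=(0,0)|\cF\vee\cG)$. This direction is essentially immediate.

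The substance lies in the second assertion, where the point is to convert transience of $(X_{\sigma_n})$ into summability by exploiting the light (geometric) tail of $\xi$. I would argue conditionally on the environment $\cG$. The structural observation is that, once $\cG$ is frozen, the increments $X_{\sigma_n}-X_{\sigma_{n-1}}$ are the horizontal displacements accumulated along the successive excursions of the skeleton away from $0$; by the strong Markov property these excursions are i.i.d., they use fresh independent waiting times, and they see the one fixed orientation field $\bom{\varepsilon}$, so the increments are i.i.d.\ and $(X_{\sigma_n})_n$ is a genuine (heavy--tailed) random walk on $\BbZ$ started at $0$. For such a walk transience is equivalent to $G(0):=\sum_n\BbP(X_{\sigma_n}=0|\cG)<\infty$, and decomposing at the first hitting time $\tau_x$ of $x$ gives $G(x)=\BbP(\tau_x<\infty|\cG)\,G(0)\le G(0)$ for every $x\in\BbZ$.

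It then remains to estimate the interval--hitting probabilities. Since $0\in I(X_{\sigma_n},\varepsilon_0\xi)$ forces $\xi\ge|X_{\sigma_n}|$, and $\xi$ is independent of $X_{\sigma_n}$ with $\BbP(\xi\ge m)=q^m$, one gets $\BbP(0\in I(X_{\sigma_n},\varepsilon_0\xi)|\cG)\le\BbE[q^{|X_{\sigma_n}|}|\cG]$. Summing over $n$ and interchanging the two sums,
\[
\sum_n\BbP(0\in I(X_{\sigma_n},\varepsilon_0\xi)|\cG)\le\sum_{x\in\BbZ}q^{|x|}G(x)\le G(0)\sum_{x\in\BbZ}q^{|x|}=G(0)\,\frac{1+q}{1-q}<\infty .
\]
Integrating the conditional identity of Lemma~\ref{lem-return-of-M} over $\cF$ (tower property) turns this into $\sum_l\BbP(\bM_l=(0,0)|\cG)<\infty$ for almost every environment, so the expected number of visits of $(\bM_n)$ to $(0,0)$ is a.s.\ finite and $(\bM_n)$ is transient. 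The main obstacle I anticipate is the clean justification of the i.i.d.\ renewal structure of $(X_{\sigma_n})$ given $\cG$ --- precisely that freezing the environment keeps the successive excursion displacements identically distributed --- since this is what licenses $G(x)\le G(0)$; the geometric summation afterwards is routine.
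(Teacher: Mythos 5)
Your argument is correct and follows essentially the same route as the proof in the cited reference (the present paper only quotes the statement of this lemma): part 1 from the pathwise inclusion $\{X_{\sigma_n}=0\}\subseteq\{0\in I(X_{\sigma_n},\varepsilon_0\xi_0^0)\}$, and part 2 from the tail bound $\BbP(0\in I(X_{\sigma_n},\varepsilon_0\xi_0^0)\mid\cG)\le \BbE(p^{|X_{\sigma_n}|}\mid\cG)$ combined with $G(x)\le G(0)<\infty$ for the walk $(X_{\sigma_n})$, whose increments are indeed i.i.d.\ conditionally on $\cG$ by the excursion decomposition of the vertical skeleton. The only blemish is notational: with the paper's convention $\BbP(\xi=k)=qp^{k}$ the tail is $\BbP(\xi\ge m)=p^{m}$ rather than $q^{m}$, which changes nothing since both $p$ and $q$ lie in $(0,1)$.
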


Let $\xi$ be a geometric random variable equidistributed with $\xi_i^{(y)}$.
Denote 
\[\chi(\theta)=\BbE \exp(i\theta\xi)=\frac{q}{1-p\exp(i\theta)}
=r(\theta)\exp(i\alpha(\theta)), \ \ \theta
\in[-\pi,\pi]\]
its characteristic function, where
\[r(\theta)=|\chi(\theta)|=\frac{q}{\sqrt{q^2+2p(1-\cos\theta)}}=r(-\theta); \ 
\alpha(\theta)=\arctan\frac{p\sin\theta}{1-p\cos\theta}=-\alpha(-\theta).\]
Notice that $r(\theta)<1$ for $\theta\in [-\pi,\pi]\setminus\{0\}$.
 Then
\begin{eqnarray*}
\BbE\exp(i\theta X_n) &=& \BbE\left(\BbE(\exp(i\theta X_n)|\cF\vee\cG)\right)
\; =\; \BbE\left(\BbE(\exp(i\theta\sum_{y\in\BbZ}\varepsilon_y 
\sum_{i=1}^{\eta_{n-1}(y)}\xi_i^{(y)}|\cF\vee\cG)\right)\\
&=&\BbE\left(\prod_{y\in\BbZ}\chi(\theta\varepsilon_y)^{\eta_{n-1}(y)}\right).
\end{eqnarray*}

\section{Proof of transience}

Introduce, as was the case in \cite{CamPet-rwrol}, constants $\delta_i>0$ for $i=1,2,3$ and for $n\in\BbN$ the sequence of events $A_n=A_{n,1}\cap A_{n,2}$ and $B_n$ defined by
\[
A_{n,1} = \left\{\omega\in\Omega: 
\max_{0\leq k\leq 2n}|Y_k|<n^{\frac{1}{2}+\delta_1}\right\}; \; 
A_{n,2} = \left\{\omega\in\Omega: \max_{y\in\BbZ}\eta_{2n-1}(y)<
n^{\frac{1}{2}+\delta_2}\right\},\]
\[B_n =\left \{\omega\in A_n: 
\left|\sum_{y\in\BbZ}\varepsilon_y \eta_{2n-1}(y)\right|
>n^{\frac{1}{2}+\delta_3}\right\};\]
the range of possible values for $\delta_i$, $i=1,2,3$, will be chosen
later (see the end of the proof of proposition \ref{prop-estim-Bnc}). 
Obviously $A_{n,1}, A_{n,2}$ and hence $A_n$ belong to $\cF_{2n}$;
moreover 
$B_n\subseteq A_n$ and $B_n\in \cF_{2n}\vee\cG$. We denote in the sequel generically 
$d_{n,i}=n^{\frac{1}{2}+\delta_i}$, for $i=1,2,3$.

Since $B_n\subseteq A_n$ and both sets are $\cF_{2n}\vee\cG$-measurable,
decomposing the unity as
$1=\id_{B_n}+\id_{A_n\setminus B_n}+ \id_{A_n^c}$,
we get
$p_n=p_{n,1}+p_{n,2}+p_{n,3}$,
where
$p_n= \BbP(X_{2n}=0; Y_{2n}=0)$,
$p_{n,1}=  \BbP(X_{2n}=0; Y_{2n}=0;B_n)$,
$p_{n,2}=\BbP(X_{2n}=0; Y_{2n}=0;A_n\setminus B_n)$, and 
$p_{n,3}=  \BbP(X_{2n}=	0; Y_{2n}=0;A_n^c)$.
By repeating verbatim the reasoning  of propositions 4.1 and 4.3 of \cite{CamPet-rwrol}, we get 
\begin{prop}
\label{prop-estimate-pn13}
For large $n$, there exist  $\delta>0$ and $\delta'>0$ and $c>0$ and $c'>0$ such that
\[p_{n,1}=\cO(\exp(-cn^\delta)) \ \ \textrm{and}\ \ p_{n,3}=\cO(\exp(-c'n^{\delta'})).\]
\end{prop}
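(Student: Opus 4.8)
The plan is to treat $p_{n,1}$ and $p_{n,3}$ as genuine error terms whose smallness comes from two unrelated mechanisms, and to exploit the fact that, after conditioning on $\cF\vee\cG$, both estimates are \emph{insensitive to the detailed environment}: the orientations $\varepsilon_y$ enter only through their signs, so the bounds hold for \emph{any} realisation of $\bom{\varepsilon}$ and the arguments of \cite{CamPet-rwrol} transfer unchanged. Concretely, $p_{n,3}$ will be controlled by events depending only on the vertical skeleton, while $p_{n,1}$ will be a conditional large-deviation estimate for the horizontally embedded walk on the event $B_n$, where its conditional drift is atypically large.

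For $p_{n,3}$ I would first note that $p_{n,3}\le\BbP(A_n^c)\le\BbP(A_{n,1}^c)+\BbP(A_{n,1}\cap A_{n,2}^c)$, and that $A_{n,1},A_{n,2}$ are $\cF$-measurable, i.e.\ functions of the simple symmetric walk $(Y_k)$ alone. The term $\BbP(A_{n,1}^c)=\BbP(\max_{k\le 2n}|Y_k|\ge d_{n,1})$ is handled by the reflection principle together with Hoeffding's inequality for the $\pm1$ increments, giving a bound $\cO(\exp(-cd_{n,1}^2/n))=\cO(\exp(-cn^{2\delta_1}))$. For the occupation measure I would estimate the local-time tail at a fixed level: writing $\tau_0$ for the first return time of $(Y_k)$ to a level and using the classical asymptotics $\BbP(\tau_0>t)\sim c/\sqrt t$, the event $\{\eta_{2n-1}(y)\ge d_{n,2}\}$ forces at least $d_{n,2}$ independent return times to be $\le 2n$, whence $\BbP(\eta_{2n-1}(y)\ge d_{n,2})\le(1-c/\sqrt n)^{d_{n,2}}\le\exp(-cn^{\delta_2})$. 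On $A_{n,1}$ only the $\cO(d_{n,1})$ levels with $|y|<d_{n,1}$ are visited, so a union bound yields $\BbP(A_{n,1}\cap A_{n,2}^c)=\cO(d_{n,1}\exp(-cn^{\delta_2}))$. Collecting the two contributions gives $p_{n,3}=\cO(\exp(-c'n^{\delta'}))$ with $\delta'=\min(2\delta_1,\delta_2)$.

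For $p_{n,1}$ I would condition on $\cF\vee\cG$, under which $B_n$ and $\{Y_{2n}=0\}$ are measurable and $X_{2n}=\sum_y\varepsilon_y\sum_{i=1}^{\eta_{2n-1}(y)}\xi_i^{(y)}$ is a sum of exactly $\sum_y\eta_{2n-1}(y)=2n$ independent signed geometric variables, with conditional mean $\mu\sum_y\varepsilon_y\eta_{2n-1}(y)$ (where $\mu=\BbE\xi$) and conditional variance of order $n$; crucially the signs affect only the mean. On $B_n$ the mean has modulus $>\mu d_{n,3}$, so $\{X_{2n}=0\}$ is a deviation of order $n^{1/2+\delta_3}$ from a mean sitting $n^{\delta_3}$ standard deviations away from the origin. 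Since the geometric increments have a finite exponential moment, a Chernoff/Bernstein bound applied conditionally gives $\BbP(X_{2n}=0\mid\cF\vee\cG)\le 2\exp(-c\,d_{n,3}^2/n)=2\exp(-cn^{2\delta_3})$ on $B_n$, provided $\delta_3<\tfrac12$ so that the optimal tilt stays in the region where the moment generating function is finite (the Gaussian regime). Integrating over $B_n\cap\{Y_{2n}=0\}$ then yields $p_{n,1}=\cO(\exp(-cn^{2\delta_3}))$, i.e.\ $\delta=2\delta_3$. Equivalently, the exact Fourier representation $\BbP(X_{2n}=0\mid\cF\vee\cG)=\frac{1}{2\pi}\int_{-\pi}^\pi r(\theta)^{2n}\cos\!\big(\alpha(\theta)\sum_y\varepsilon_y\eta_{2n-1}(y)\big)\,d\theta$ exhibits the same rate: $r(\theta)^{2n}$ concentrates at scale $|\theta|\sim n^{-1/2}$, where $\alpha(\theta)\approx\mu\theta$, and the rapid oscillation of the cosine forced by the large factor $\sum_y\varepsilon_y\eta_{2n-1}(y)$ produces Gaussian decay $\exp(-cn^{2\delta_3})$.

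The routine inputs — the reflection/Hoeffding bound and the return-time asymptotics — are standard; the step requiring the most care is the conditional large deviation for $p_{n,1}$, where one must verify that the exponential-moment estimate is uniform in the environment and in the conditioning and remains in the Gaussian regime. This is exactly where the admissible range of $\delta_3$ (and its compatibility with $\delta_1,\delta_2$) is dictated; the exponents must be chosen jointly so that all three are positive and, simultaneously, the main term $p_{n,2}$ can be estimated in the complementary regime. The genuinely new point relative to \cite{CamPet-rwrol} is merely the observation that conditioning on $\cF\vee\cG$ freezes the random environment into fixed signs, after which the previous computations apply verbatim.
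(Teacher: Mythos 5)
Your proposal is correct and follows essentially the same route as the paper, which simply invokes propositions 4.1 and 4.3 of \cite{CamPet-rwrol}: those propositions control $\BbP(A_n^c)$ by the reflection principle and return-time (local-time) estimates for the vertical skeleton, and control $p_{n,1}$ by observing that on $B_n$ the conditional law of $X_{2n}$ given $\cF\vee\cG$ is centred at distance at least of order $d_{n,3}=n^{1/2+\delta_3}$ from the origin while its fluctuations are of order $\sqrt{n}$, yielding a Gaussian-regime deviation bound $\exp(-cn^{2\delta_3})$. You have merely supplied the details that the paper delegates to the earlier reference, including the correct observation that the environment enters only through the signs $\varepsilon_y$ and hence the bounds are uniform in $\cG$.
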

Consequently $\sum_{n\in\BbN}( p_{n,1}+ p_{n,3})<\infty$.
The proof will be complete if we show that $\sum_{n\in\BbN} p_{n,2}<\infty$.

\begin{lemm}
\label{lem-estim-conditional}
On the set $A_n\setminus B_n$, we have --- uniformly on $\cF\vee \cG$ ---
\[\BbP(X_{2n}=0|\cF\vee\cG)=\cO(\sqrt{\frac{\ln n}{n}}).\]
\end{lemm}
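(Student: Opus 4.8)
The plan is to evaluate the conditional probability by Fourier inversion and then to bound it by retaining only the \emph{modulus} of the conditional characteristic function, which strips all randomness from the integrand. Since $X_{2n}$ is $\BbZ$-valued, I would start from
\[\BbP(X_{2n}=0\mid\cF\vee\cG)=\frac{1}{2\pi}\int_{-\pi}^{\pi}\BbE\bigl(e^{i\theta X_{2n}}\mid\cF\vee\cG\bigr)\,d\theta=\frac{1}{2\pi}\int_{-\pi}^{\pi}\prod_{y\in\BbZ}\chi(\theta\varepsilon_y)^{\eta_{2n-1}(y)}\,d\theta,\]
the second equality being the conditional characteristic function computed just before the statement. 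Taking absolute values inside the integral and using that $r$ is even together with $\varepsilon_y=\pm1$, every factor obeys $|\chi(\theta\varepsilon_y)|=r(\theta\varepsilon_y)=r(\theta)$; since $\sum_{y\in\BbZ}\eta_{2n-1}(y)=2n$ is the (deterministic) total number of vertical steps performed up to time $2n-1$, the product of the moduli collapses to $r(\theta)^{2n}$, with no dependence on the environment or on the skeleton. This yields
\[\BbP(X_{2n}=0\mid\cF\vee\cG)\le\frac{1}{2\pi}\int_{-\pi}^{\pi}r(\theta)^{2n}\,d\theta.\]

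It remains to estimate this deterministic integral. From the explicit expression of $r$ and the fact, already recorded, that $r(\theta)<1$ for $\theta\ne0$ with quadratic behaviour $-\ln r(\theta)\sim\frac{p}{2q^{2}}\theta^{2}$ near the origin, one obtains a Gaussian domination $r(\theta)\le e^{-c\theta^{2}}$ on $[-\pi,\pi]$ for some constant $c>0$ depending only on $p$; integrating, $\int_{-\pi}^{\pi}r(\theta)^{2n}\,d\theta\le\int_{-\infty}^{+\infty}e^{-2cn\theta^{2}}\,d\theta=\cO(n^{-1/2})$, which is even sharper than the asserted order. If one prefers to reach the stated bound by a purely elementary split, one fixes a cutoff $\theta_n$ of order $\sqrt{(\ln n)/n}$, with the constant calibrated so that $r(\theta_n)^{2n}\le 1/n$: the contribution of $\{|\theta|\le\theta_n\}$ is at most $\theta_n/\pi=\cO(\sqrt{(\ln n)/n})$ because the integrand is bounded by $1$, while the contribution of $\{|\theta|>\theta_n\}$ is at most $r(\theta_n)^{2n}\le 1/n$ because $r$ decreases on $[0,\pi]$, and this last term is negligible.

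Two structural remarks. The bound produced is a function of $n$ alone, hence uniform over $\cF\vee\cG$ as claimed; in fact it holds on all of $\Omega$, the restriction to $A_n\setminus B_n$ being irrelevant to the modulus estimate and kept only because the lemma will be invoked on that event when bounding $p_{n,2}$. I do not anticipate a genuine obstacle: the single point that must be handled correctly is the modulus simplification, namely the observation that the random phase $\alpha(\theta)\sum_{y\in\BbZ}\varepsilon_y\eta_{2n-1}(y)$ disappears under $|\cdot|$ while the random exponents sum to the deterministic value $2n$. The $\sqrt{\ln n}$ slack relative to the sharp $n^{-1/2}$ is deliberate and harmless, since the summability of $\sum_n p_{n,2}$ will be secured by the complementary estimate showing that $\BbP\bigl((A_n\setminus B_n)\cap\{Y_{2n}=0\}\bigr)$ beats $n^{-1/2}$ by a polynomial factor.
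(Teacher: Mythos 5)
Your proposal is correct and follows essentially the same route as the paper: Fourier inversion of the conditional characteristic function, reduction to the deterministic integral $\int r(\theta)^{2n}d\theta$ via the modulus (using $\sum_y\eta_{2n-1}(y)=2n$), and a split at a cutoff of order $\sqrt{(\ln n)/n}$ — the paper uses exactly $a_n=\sqrt{(\ln n)/n}$ and bounds the tail by $r(a_n)^{2n}\asymp n^{-3/4}$, just as in your elementary variant. Your Gaussian-domination alternative and your observation that the bound is uniform on all of $\Omega$ (the event $A_n\setminus B_n$ playing no role here) are both accurate minor refinements.
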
 
\begin{proof}
Use the $\cF\vee\cG$-measurability of the variables $(\varepsilon_y)_{y\in\BbZ}$ and
$(\eta_n(y))_{y\in\BbZ,n\in\BbN}$ to express the conditional characteristic function
of the variable $X_{2n}$ as follows:
\[\chi_1(\theta)=\BbE(\exp(i\theta X_{2n})|\cF\vee\cG)=\prod_{y\in\BbZ}\chi(\theta\varepsilon_y)^
{\eta_{2n-1}(y)}.\]
Hence, $\BbP(X_{2n}=0|\cF\vee\cG)=\frac{1}{2\pi}\int_{-\pi}^{\pi}
\chi_1(\theta)d\theta.$
Now use the decomposition of $\chi$ into a  the modulus part, $r(\theta)$ --- that is an even
function of $\theta$ ---  and the angular part of $\alpha(\theta)$ 
and the fact that there is a constant $K<1$ such that
for $\theta\in[-\pi,-\pi/2]\cup[\pi/2,\pi]$ we can bound $r(\theta)<K$ to majorise
\[\BbP(X_{2n}=0|\cF\vee\cG)\leq \frac{1}{\pi}\int_{0}^{\pi/2}
r(\theta)^{2n}d\theta +\cO(K^n).\]
Fix $a_n=\sqrt{\frac{\ln n}{n}}$ and split the integration integral $[0,\pi/2]$ into 
$[0,a_n]\cup[a_n,\pi/2]$. For the first part, we majorise the integrand by 1, so that
$\int_{0}^{a_n}      
r(\theta)^{2n}d\theta \leq a_n.$

For the second part, we use the fact that $r(\theta)$ is decreasing for $\theta\in [0,\pi/2]$. Hence
$\frac{1}{\pi}\int_{a_n}^{\pi/2}r(\theta)^{2n}d\theta \leq \frac{1}{2} r(a_n)^{2n}$. 
Now, $\lim_{n\rightarrow \infty} a_n=0$, hence for large $n$ it is enough to study the behaviour of $r$ near 0, namely $r(\theta)\asymp 1-\frac{3}{8}\theta^2 +\cO(\theta^4)$. It follows that
$r(a_n)^{2n}\asymp (1-\frac{3}{4} \frac{\ln n}{2n})^{2n}\asymp\exp(-\frac{3}{4} \ln n)=n^{-\frac{3}{4}}$.
Since the estimate of the first part dominates, the result follows.
\end{proof}


\begin{lemm}{}
\label{lem:anderson}
Let $d$ be a positive  integer,  $Z$  an integer-valued random variable with law $\mu_Z$, and $G$ a centred Gaussian random variable with variance $d^2$, but otherwise independent of $Z$. Then, there exists a constant $C>0$ (independent of $d$ and of the law of $Z$) such that 
\[\BbP(|Z|\leq \frac{d}{2}) \leq C \BbP(|Z+G|\leq d).\]
\end{lemm}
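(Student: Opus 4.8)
The plan is to prove the anti-concentration comparison $\BbP(|Z|\leq \frac{d}{2})\leq C\,\BbP(|Z+G|\leq d)$ by showing that adding the Gaussian blur $G$ cannot destroy too much of the mass that $Z$ already places in a small interval. The key observation is that whenever $Z$ lands in $[-\frac{d}{2},\frac{d}{2}]$, the perturbed value $Z+G$ stays inside $[-d,d]$ as long as $|G|\leq \frac{d}{2}$; so conditioning on the value of $Z$ and using independence gives a clean lower bound.

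First I would condition on $Z$ and integrate out $G$. By independence of $Z$ and $G$,
\[
\BbP(|Z+G|\leq d)=\int_{\BbR}\BbP(|z+G|\leq d)\,d\mu_Z(z)\geq \int_{\{|z|\leq d/2\}}\BbP(|z+G|\leq d)\,d\mu_Z(z).
\]
For any $z$ with $|z|\leq \frac{d}{2}$, the event $\{|G|\leq \frac{d}{2}\}$ forces $|z+G|\leq |z|+|G|\leq d$, hence $\BbP(|z+G|\leq d)\geq \BbP(|G|\leq \frac{d}{2})$. This lower bound is crucially \emph{uniform in $z$} over the range $|z|\leq\frac{d}{2}$, which lets me pull it out of the integral.

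Next I would compute the scale-invariant Gaussian constant. Since $G$ has variance $d^2$, we can write $G=d\,G_1$ with $G_1$ a standard normal, so that $\BbP(|G|\leq \frac{d}{2})=\BbP(|G_1|\leq \frac{1}{2})=:1/C$, a strictly positive number that does \emph{not} depend on $d$ nor on the law of $Z$. Combining the two steps,
\[
\BbP(|Z+G|\leq d)\geq \BbP(|G|\leq \tfrac{d}{2})\,\mu_Z\big(\{|z|\leq \tfrac{d}{2}\}\big)=\frac{1}{C}\,\BbP\big(|Z|\leq \tfrac{d}{2}\big),
\]
and multiplying through by $C$ yields the claim with $C=\BbP(|G_1|\leq \frac12)^{-1}$.

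I do not expect a genuine obstacle here, since the argument is a direct conditioning estimate rather than a sharp anti-concentration inequality; the only point deserving care is the \emph{uniformity} of the constant $C$. One must check that the displacement bound $\BbP(|z+G|\leq d)\geq \BbP(|G|\leq \frac{d}{2})$ truly holds for every $z$ in the conditioning set and that the resulting constant depends on neither $d$ (handled by the Gaussian scaling $G=dG_1$) nor on $\mu_Z$ (handled because the integrand is bounded below by a $z$-free constant before integrating against $\mu_Z$). The integer-valued hypothesis on $Z$ is not actually needed for this direction and can be ignored. If one instead wanted the genuinely sharp version of this comparison — the one attributed to Anderson's inequality — the harder input would be the log-concavity/symmetry argument showing that convolution with a symmetric unimodal density cannot increase the peak probability; but for the stated one-sided bound the elementary conditioning above suffices.
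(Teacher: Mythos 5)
Your argument is correct and is essentially the paper's own proof seen from the other side: both reduce to the inclusion $\{|Z|\leq \frac{d}{2}\}\cap\{|G|\leq \frac{d}{2}\}\subseteq\{|Z+G|\leq d\}$ together with independence, the paper conditioning on $G$ and bounding its density from below on $[-\frac{d}{2},\frac{d}{2}]$, while you condition on $Z$ and evaluate $\BbP(|G|\leq \frac{d}{2})=\BbP(|G_1|\leq \frac{1}{2})$ by scaling. Your explicit constant $C=\BbP(|G_1|\leq \frac{1}{2})^{-1}$ is clean and manifestly independent of $d$ and $\mu_Z$, so nothing is missing.
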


\begin{proof}
Denote by $\gamma(g)= \frac{1}{\sqrt{2\pi d^2}} \exp(-\frac{g^2}{2 d^2})$ the density of the Gaussian random variable $G$ and observe that on $[-\frac{d}{2},\frac{d}{2}]$ the density is minorised by $\gamma(g)\geq \frac{2C^{-1}}{d}$ with $C=\sqrt{2\pi e}$. Then 
\[
\BbP(|Z+G|\leq d) \geq  \int_{-\frac{d}{2}}^{\frac{d}{2}} \mu_Z([-d-g, \ldots, d-g]) \gamma(g)dg
\geq 2C^{-1} \mu_Z(\{-\frac{d}{2}, \ldots, \frac{d}{2}\}).
\]
\end{proof}

\begin{prop}
\label{prop-estim-Bnc}
For all $\beta<1$, there exists a $\delta_\beta>0$ such that --- uniformly in $\cF$ --- for all large $n$
\[\BbP(A_n\setminus B_n|\cF)=\cO(n^{-\delta_\beta}).\]
\end{prop}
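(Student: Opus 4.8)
The plan is to use that $A_n\in\cF_{2n}\subseteq\cF$ and that, directly from the definition of $B_n$, one has $A_n\setminus B_n=A_n\cap\{|W_n|\le n^{1/2+\delta_3}\}$, where I abbreviate $\eta(y):=\eta_{2n-1}(y)$ and set $W_n:=\sum_{y\in\BbZ}\varepsilon_y\,\eta(y)$ (the integer-valued signed occupation sum appearing in $B_n$, not to be confused with $X_{2n}$). Since $A_n$ is $\cF$-measurable while the orientations $(\varepsilon_y)$ are $\cG$-measurable and independent of $\cF$,
\[
\BbP(A_n\setminus B_n\mid\cF)=\id_{A_n}\,\BbP\!\left(|W_n|\le n^{1/2+\delta_3}\ \big|\ \cF\right),
\]
so it suffices to bound the conditional small-ball probability by $\cO(n^{-\delta_\beta})$ uniformly over every skeleton lying in $A_n$. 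The whole gain over the trivial bound will come from showing that, given $\cF$, the variable $W_n$ is so spread out that the chance of landing in a window of width $n^{1/2+\delta_3}$ is small.

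To make that spread quantitative I would condition additionally on $\sigma(\lambda):=\sigma(\lambda_y:y\in\BbZ)$. Given $\cF\vee\sigma(\lambda)$ one has $W_n=a+\sum_{y:\lambda_y=1}\rho_y\,\eta(y)$ with $a$ deterministic, so the conditional characteristic function has modulus $\prod_{y:\lambda_y=1}|\cos(\theta\eta(y))|$ and conditional variance $V:=\sum_{y:\lambda_y=1}\eta(y)^2$. Applying lemma \ref{lem:anderson} conditionally (legitimate since $W_n$ is integer-valued and the constant there is uniform in the law) with $d=2n^{1/2+\delta_3}$, and then majorising the density of $W_n+G$ by its inverse Fourier transform, I obtain
\[
\BbP\!\left(|W_n|\le n^{1/2+\delta_3}\ \big|\ \cF\vee\sigma(\lambda)\right)\le\frac{Cd}{\pi}\int_{-\infty}^{\infty}\exp\!\Bigl(-\tfrac12\!\!\sum_{y:\lambda_y=1}\!\!\sin^2(\theta\eta(y))\Bigr)\,e^{-d^2\theta^2/2}\,d\theta .
\]
The Gaussian factor confines the effective range to $|\theta|\lesssim 1/d\approx n^{-1/2-\delta_3}$; provided $\delta_2<\delta_3$, there $|\theta\eta(y)|\le\theta\max_y\eta(y)\lesssim n^{\delta_2-\delta_3}\to0$, so each $\sin^2(\theta\eta(y))$ is comparable to $\theta^2\eta(y)^2$. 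Moreover the first revival of the cosine product sits at $|\theta|\approx\pi/\max_y\eta(y)\gg 1/d$, so the Gaussian tail beyond the cutoff is negligible. This yields $\BbP(|W_n|\le n^{1/2+\delta_3}\mid\cF\vee\sigma(\lambda))\lesssim d/\sqrt{V}$ whenever $V$ is of its expected order.

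It remains to bound $V$ from below. Its conditional mean is $\mu_V:=\BbE[V\mid\cF]=\sum_y\BbP(\lambda_y=1)\,\eta(y)^2$, and here the geometry of $A_n$ is essential: on $A_n$ the occupation measure is confined to $|y|<n^{1/2+\delta_1}$ while $\max_y\eta(y)<n^{1/2+\delta_2}$ and $\sum_y\eta(y)=2n$, so by Cauchy--Schwarz $\sum_y\eta(y)^2\ge(2n)^2/(2n^{1/2+\delta_1})\gtrsim n^{3/2-\delta_1}$, and since every visited level obeys $\BbP(\lambda_y=1)\gtrsim n^{-\beta(1/2+\delta_1)}$ this gives $\mu_V\gtrsim n^{3/2-\beta/2-(1+\beta)\delta_1}$ uniformly on $A_n$. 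As $V=\sum_y\lambda_y\eta(y)^2$ is a sum of independent terms with $\mathrm{Var}(V\mid\cF)\le(\max_y\eta(y)^2)\,\mu_V\le n^{1+2\delta_2}\mu_V$, Chebyshev's inequality bounds $\BbP(V<\tfrac12\mu_V\mid\cF)\lesssim n^{1+2\delta_2}/\mu_V$. Splitting $\BbE[\,\cdot\mid\cF]$ according to $\{V\ge\mu_V/2\}$ and its complement and inserting the lower bound on $\mu_V$ leads to
\[
\BbP\!\left(|W_n|\le n^{1/2+\delta_3}\ \big|\ \cF\right)\lesssim\frac{d}{\sqrt{\mu_V}}+\BbP\!\left(V<\tfrac12\mu_V\ \big|\ \cF\right)=\cO\!\left(n^{-[(1-\beta)/4-\delta_3-(1+\beta)\delta_1/2]}\right)+\cO\!\left(n^{-[(1-\beta)/2-2\delta_2-(1+\beta)\delta_1]}\right).
\]
Because $\beta<1$, both exponents are strictly positive once $0<\delta_2<\delta_3<(1-\beta)/4$ and $\delta_1$ are chosen small enough, which fixes the admissible range of the $\delta_i$ and produces a positive $\delta_\beta$.

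The delicate point — and the step I expect to be the genuine obstacle — is the uniform lower bound $\mu_V\gtrsim n^{3/2-\beta/2-(1+\beta)\delta_1}$ on $A_n$: it must hold for \emph{every} skeleton in $A_n$ rather than merely typically, so it rests on the two opposing constraints defining $A_n$ (the cap on the local time, which forces the walk to spread over $\gtrsim n^{1/2-\delta_2}$ levels, together with the confinement of its range to $|y|<n^{1/2+\delta_1}$), and it must then be upgraded from the annealed mean $\mu_V$ to an almost-sure lower bound on $V$ through the concentration estimate above. Once this is secured, the anti-concentration via lemma \ref{lem:anderson} and the Fourier bound is essentially routine, with the only care being the ordering $\delta_2<\delta_3<(1-\beta)/4$.
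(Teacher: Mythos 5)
Your argument is correct and lands on exactly the same final exponent as the paper, namely $-(1-\beta)/4+\delta_3+(1+\beta)\delta_1/2$, but it takes a genuinely different route through the central estimate. The two proofs coincide up to the Fourier stage: Gaussian smoothing via lemma \ref{lem:anderson}, inversion, and a cutoff of the $t$-integral at a scale lying strictly between $1/d_{n,3}$ and $1/\max_y\eta_{2n-1}(y)$ (the paper's $b_n=n^{\delta_4}/d_{n,3}$ with $\delta_2+\delta_4<\delta_3$); do make that intermediate scale explicit, since cutting literally at $1/d_{n,3}$ leaves a Gaussian tail of order $1/d_{n,3}$ and hence an $\cO(1)$ contribution after multiplication by $d_{n,3}$ --- your remark about the first revival of the cosine shows you see this, but the cutoff must actually sit at $\asymp 1/\max_y\eta_{2n-1}(y)$, which is exactly where $\delta_2<\delta_3$ is used. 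From there the paper stays annealed in $(\lambda_y,\rho_y)$, bounds $|A_{y,n}(t)|\le\exp(-ct^2\eta_{2n-1}(y)^2/4|y|^\beta)$, and beats the product down with a H\"older inequality weighted by $\pi_n(y)=\eta_{2n-1}(y)/2n$, ending with the entropy bound $H(\pi_n)\le\log\card\supp\pi_n$ valid on $A_n$. You instead condition on $\sigma(\lambda_y,\,y\in\BbZ)$, which turns the occupation sum into a deterministic shift plus a Rademacher sum of conditional variance $V=\sum_{y:\lambda_y=1}\eta_{2n-1}(y)^2$, apply the classical anti-concentration bound $\cO(d_{n,3}/\sqrt{V})$, and control $V$ from below by its $\cF$-conditional mean (Cauchy--Schwarz on $A_n$) plus Chebyshev. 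What your version buys is transparency: it isolates exactly where $\beta<1$ enters (it forces $\BbE(V|\cF)\gg d_{n,3}^2$) and makes visible that only the defect lines, not the periodic background, produce the anti-concentration, consistently with the paper's remark that periodicity of $f$ is irrelevant for transience; the price is the extra concentration step for $V$, which the paper's H\"older--entropy device avoids by never deconditioning on $\lambda$. Two small repairs: the hypothesis prescribes $\BbP(\lambda_y=1)=c|y|^{-\beta}$ only for large $|y|$, so your claim that \emph{every} visited level satisfies $\BbP(\lambda_y=1)\gtrsim n^{-\beta(1/2+\delta_1)}$ must exclude a bounded set of levels, whose contribution to $\sum_y\eta_{2n-1}(y)^2$ is $\cO(n^{1+2\delta_2})=o(n^{3/2-\delta_1})$ and hence harmless; and the Cauchy--Schwarz step needs $\card\supp\eta_{2n-1}\le 2n^{1/2+\delta_1}$, which is precisely what $A_{n,1}$ supplies, so the uniformity over skeletons in $A_n$ that the proposition requires is indeed secured.
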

\begin{proof}
The required probability is an estimate, on the event $A_n$, of
the conditional probability $\BbP(|\sum_{y\in\BbZ} \zeta_{y,n}|\leq d_{n,3}|\cF)$,
where we denote $\zeta_{y,n}=\varepsilon_y \eta_{2n-1}(y)$. Extend the probability space
$(\Omega,\cA,\BbP)$ to carry an auxilliary variable
$G$ assumed to be centred Gaussian with variance $d_{n,3}^2$, (conditionally on $\cF$) 
independent of the $\zeta_{y,n}$'s. Since  $G$
is a symmetric random variable and $[-d_{n,3},d_{n,3}]$
is a symmetric set around 0, then by lemma \ref{lem:anderson},
there exists a positive constant
$c:=\sqrt{2\pi e}$ (hence independent of $n$) such that
\[\BbP(|\sum_{y\in\BbZ}\zeta_{y,n}|\leq d_{n,3}|\cF)\leq
c \BbP(|\sum_{y\in\BbZ}\zeta_{y,n}+G|\leq d_{n,3}|\cF).\]
Let $\chi_2(t)=\BbE(\exp(i t \sum_y\zeta_{y,n})|\cF)=\prod_y A_{y,n}(t),$
where $A_{y,n}(t)= \BbE\left(\exp(i t\zeta_{y,n} |\cF\right)$, 
and $\chi_3(t)=\BbE(\exp(i t G)|\cF)=\exp(-t^2d^2_{n,3}/2).$
Therefore, $\BbE(\exp(i t (\sum_y\zeta_{y,n}+G))|\cF)=\chi_2(t)\chi_3(t),$
and using the Plancherel's formula,
\[\BbP(|\sum_{y\in\BbZ}\zeta_{y,n}+G|\leq d_{n,3}|\cF)=\frac{d_{n,3}}{\pi}
\int_\BbR\frac{\sin(td_{n,3})}{td_{n,3}}\chi_2(t)\chi_3(t) dt\leq Cd_{n,3} I,\]
where $I=\int_\BbR|\chi_2(t)|\exp(-t^2d^2_{n,3}/2) dt.$
Fix $b_n=\frac{n^{\delta_4}}{d_{n,3}}$, for some $\delta_4>0$  and split the integral defining $I$ into
$I_1+I_2$, the first part being for $|t|\leq b_n$ and the second for
$|t|> b_n$.

We have
\begin{eqnarray*}
I_2&\leq& C \int_{|t|> b_n} \exp(-t^2 d^2_{n,3}/2) \frac{dt}{2\pi}\;
=\; \frac{C}{d_{n,3}} \int_{|s|> n^{\delta_4}} \exp(-s^2/2) \frac{ds}{2\pi}\\
&\leq&2 \frac{C}{d_{n,3}}\frac{1}{n^{\delta_4}} \frac{\exp(-n^{2\delta_4}/2)}{2\pi},
\end{eqnarray*}
because the probability that a centred normal random variable of variance 1,
whose density is denoted $\phi$,  exceeds a threshold
$x>0$ is majorised by $\frac{\phi(x)}{x}$.

For $I_1$ we get, $ I_1\leq \int_{|t|\leq b_n} \prod_y |A_{y,n}(t)| dt.$

Assume for the moment that the inequality $|t\eta_{2n-1}(y)|\leq 1$ holds. Use then the fact that $\cos(x)\leq 1-\frac{x^2}{4}$, valid for $|x|\leq 1$, to write 
\begin{eqnarray*}
|A_{y,n}(t)| &=& |\BbE_0(\exp(it\varepsilon_y \eta_{2n-1}(y))\cF)|\\
&=&|(1-\frac{c}{|y|^\beta}) \exp(it\eta_{2n-1}(y) f(y))+\frac{c}{|y|^\beta} \cos(t\eta_{2n-1}(y))|\\
&\leq& 1-\frac{c}{|y|^\beta}+ \frac{c}{|y|^\beta} (1-\frac{t^2\eta_{2n-1}^2(y)}{4})\\
&=& 1-\frac{ct^2\eta_{2n-1}^2(y)}{4|y|^\beta}\;
\leq\; \exp(-\frac{ct^2\eta_{2n-1}^2(y)}{4|y|^\beta}).
\end{eqnarray*}
The assumed inequality  $|t\eta_{2n-1}(y)|\leq 1$ is verified whenever the constants $\delta_2, \delta_3$ and $\delta_4$ are chosen so that $\delta_2+\delta_4-\delta_3<0$ holds, because $|t|\leq b_n$ and 
$b_n=\frac{n^{d_4}}{n^{1/2+\delta_3}}$, whereas $\eta_{2n-1}(y) \leq n^{1/2+\delta_2}$.
Therefore,
\[|\chi_2(t)|\leq \prod_y \exp\left(-\frac{t^2}{4} \eta_{2n-1}^2(y) \frac{c}{|y|^\beta}\right).\]
 Now, define $\pi_n(y)=\frac{\eta_{2n-1}(y)}{2n}$; obviously $\sum_y \pi_n(y)=1$, establishing that $(\pi_n(y))_y$ is a probability measure on $\BbZ$. Therefore, applying H\"older's
inequality we obtain $I_1\leq \prod'_y J_n(y) ^{\pi_n(y)}$, where $\prod'_y$ means that the product runs over those $y$ such that $\eta_{2n-1}(y)\not =0$ and
\begin{eqnarray*}
 J_n(y)&=&\int_{- b_n}^{b_n}\exp\left(-\frac{t^2}{4} \eta_{2n-1}^2(y) \frac{c}{|y|^\beta}\frac{1}{\pi_n(y)}\right)dt\\
&=&\sqrt{\frac{2\pi |y|^\beta}{cn\eta_{2n-1}(y)}}
\int_{- b_n \sqrt{\frac{cn\eta_{2n-1}(y)}    { |y|^\beta}}} ^{ b_n \sqrt{\frac{cn\eta_{2n-1}(y)}    { |y|^\beta}}}\exp(-v^2/2) \frac{dv}{\sqrt{2\pi}}\\
&\leq& \sqrt{\frac{4\pi}{c}} \exp\left(-\log 2n -\frac{1}{2} \log\pi_n(y)+\frac{\beta}{2} \log |y|\right).
\end{eqnarray*}
We conclude that
\[
I_1 \leq \prod_y \mbox{}^{'}J_n(y) ^{\pi_n(y)}
\leq \sqrt{\frac{2\pi}{c}}\exp \left(-\log 2n +\frac{1}{2} H(\pi_n)+ \frac{\beta}{2}
\sum_{y} \pi_n(y) \log|y|\right)\]
and $H(\pi_n)$ is the entropy of the probability measure $\pi_n$, reading (with the convention $0\log0=0$)
\[H(\pi_n):=-\sum_y \pi_n(y) \log \pi_n(y)\leq \log \card C_n,\] where
$C_n:=\supp \pi_n$ and, on $A_n$,  $\card C_n\leq 2n^{\frac{1}{2}+\delta_1}$. 
We conclude that  we can always chose the parameters $\delta_1$ and $\delta_3$ such that, for every $\beta<1$ there exists a parameter $\delta_\beta>0$ such that 
$d_{n,3} I_1 \leq Cn^{-\delta_\beta}.$
\end{proof}

\begin{coro}
\[\sum_{n\in\BbN}p_{n,2}<\infty.\]
\end{coro}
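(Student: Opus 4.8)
The plan is to bound $p_{n,2}$ by a summable sequence, peeling off the three available estimates in decreasing order of fineness of the conditioning. The events $\{Y_{2n}=0\}$ and $A_n\setminus B_n$ are both $\cF\vee\cG$-measurable (indeed $\{Y_{2n}=0\}\in\cF_{2n}\subseteq\cF$, while $A_n\in\cF_{2n}$ and $B_n\in\cF_{2n}\vee\cG$), and only the event $\{X_{2n}=0\}$ genuinely depends on the waiting times. First I would therefore condition on $\cF\vee\cG$ and write
\[ p_{n,2} = \BbE\left[\id_{\{Y_{2n}=0\}}\,\id_{A_n\setminus B_n}\,\BbP(X_{2n}=0\mid\cF\vee\cG)\right]. \]
On $A_n\setminus B_n$, lemma \ref{lem-estim-conditional} bounds the inner conditional probability by $C\sqrt{\ln n/n}$ uniformly, so that
\[ p_{n,2}\leq C\sqrt{\tfrac{\ln n}{n}}\;\BbP\bigl(\{Y_{2n}=0\}\cap(A_n\setminus B_n)\bigr). \]

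Next I would estimate the remaining probability by conditioning one step coarser, on $\cF$ alone. Since $\{Y_{2n}=0\}$ is $\cF$-measurable, it factors out of the conditional expectation,
\[ \BbP\bigl(\{Y_{2n}=0\}\cap(A_n\setminus B_n)\bigr)=\BbE\left[\id_{\{Y_{2n}=0\}}\,\BbP(A_n\setminus B_n\mid\cF)\right], \]
and proposition \ref{prop-estim-Bnc} gives $\BbP(A_n\setminus B_n\mid\cF)\leq C\,n^{-\delta_\beta}$ uniformly in $\cF$. Hence this probability is at most $C\,n^{-\delta_\beta}\,\BbP(Y_{2n}=0)$. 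Finally, since $(Y_n)$ is a simple random walk on $\BbZ$, the local central limit theorem (equivalently Stirling applied to $\binom{2n}{n}2^{-2n}$) yields $\BbP(Y_{2n}=0)=\cO(n^{-1/2})$.

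Assembling the three factors gives
\[ p_{n,2}\leq C'\,\sqrt{\tfrac{\ln n}{n}}\;n^{-\delta_\beta}\;n^{-1/2}=C'\,n^{-1-\delta_\beta}\sqrt{\ln n}, \]
and because $\delta_\beta>0$ we have $\sqrt{\ln n}=o(n^{\delta_\beta/2})$, so the general term is $\cO(n^{-1-\delta_\beta/2})$ and the series $\sum_{n}p_{n,2}$ converges. The point to watch is not any single estimate but the bookkeeping of the three conditionings together with the accounting of the decay exponents: the $\sqrt{\ln n/n}$ of lemma \ref{lem-estim-conditional} combined with the $n^{-\delta_\beta}$ of proposition \ref{prop-estim-Bnc} would only yield $n^{-1/2-\delta_\beta}\sqrt{\ln n}$, which need not be summable when $\delta_\beta<1/2$; it is the extra $n^{-1/2}$ coming from the vertical return probability $\BbP(Y_{2n}=0)$ that makes the threefold product beat $n^{-1}$ and close the argument.
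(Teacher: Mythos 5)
Your argument is correct and is essentially identical to the paper's proof: the same tower of conditionings (first on $\cF\vee\cG$ to invoke lemma \ref{lem-estim-conditional}, then on $\cF$ to invoke proposition \ref{prop-estim-Bnc}, finally the local estimate $\BbP(Y_{2n}=0)=\cO(n^{-1/2})$) yielding $p_{n,2}=\cO(n^{-(1+\delta_\beta)}\sqrt{\ln n})$. Your explicit bookkeeping of which events are measurable with respect to which $\sigma$-algebra is a welcome clarification of a step the paper leaves implicit.
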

\begin{proof}
Recall that for the standard random walk 
$\BbP(Y_{2n}=0)=\cO(n^{-1/2})$;  combining with the estimates obtained in
\ref{lem-estim-conditional} and \ref{prop-estim-Bnc}, we have 
\begin{eqnarray*}
p_{n,2}&=&\BbP(X_{2n}=0; Y_{2n}=0;A_n\setminus B_n)\\
&=& \BbE(\BbE\left(\id_{Y_{2n}=0}\left[
\BbE(\id_{A_n\setminus B_n} \BbP(X_{2n}=0|\cF\vee\cG) | \cF)\right]\right))\\
&=& \cO( n^{-1/2} n^{-\delta_\beta} \sqrt{\frac{\ln n}{n}})\;
=\; \cO(n^{-(1+\delta_\beta)} \sqrt{\ln n}),
\end{eqnarray*}
proving thus the summability of $p_{n,2}$.
\end{proof}

\noindent
\textit{Proof of the statement on transience of the theorem \ref{thm:main}:}
$p_n=p_{n,1}+p_{n,2}+p_{n,3}$ is summable because all the partial probabilities
$p_{n,i}$, for $i=1,2,3$ are all shown to be summable.
\eproof


\section{Proof of recurrence}

We define additionally the following sequence of random times:
\[\tau_0\equiv0 \ \ \textrm{and}\ \ \tau_{n+1}= \inf\{k:k>\tau_n, |Y_k-Y_{\tau_n}|=Q\} \ \ \textrm{for}\ \ n\geq 0.\]
The random variables $(\tau_{n+1}-\tau_n)_{n\geq 0}$ are independent and for all $n$ the variable  $\tau_{n+1}-\tau_n$ has the same distribution (under
$\BbP_0$) as $\tau_1$.
It is easy  to show  further (see proposition 1.13.4 of the textbook \cite{BhattacharyaWaymire2007} for instance) that these random variables have exponential moments i.e.\ $\BbE_0(\exp(\alpha \tau_1))<\infty$ for $|\alpha|$ sufficiently small.

Let $\BbZ_Q=\BbZ/Q\BbZ=\{0,1,\ldots, Q-1\}$ with integer addition replaced by addition modulo $Q$ and for any $y\in\BbZ$  denote by $\overline{y}=y \mod Q\in\BbZ_Q$. Consistently, we define $\overline{Y}_n= Y_n\mod Q$.

\begin{lemm}
\label{lem:N}
Define for $n\geq 1$ and $\overline{y}\in\BbZ_Q$, 
\[N_n(\overline{y}):=  \overline{\eta}_{\tau_{n-1},\tau_n-1}(\overline{y})=\sum_{k=\tau_{n-1}}^{\tau_n -1} \id _{\overline{y}}(\overline{Y}_k).\]
Then, for every $\overline{y}\in\BbZ_Q$,
\begin{enumeratec}
\item the conditional laws of $N_1(\overline{y})$ with respect to the events $\{Y_{\tau_1}=Q\}$ and  $\{Y_{\tau_1}=-Q\}$ are the same, and
\item the following equalities hold
\[\BbE_0 N_1(\overline{y})=\BbE_0\left( N_1(\overline{y})\;\middle|\;Y_{\tau_1}=Q\right)=\BbE_0 \left( N_1(\overline{y})\;\middle|\;Y_{\tau_1}=-Q\right) =\frac{\BbE_0\tau_1}{Q}.\] 
\end{enumeratec}
\end{lemm}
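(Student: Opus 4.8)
The plan is to reduce everything to the vertical skeleton $(Y_k)$, which is a simple symmetric walk on $\BbZ$ started at $0$ and stopped at $\tau_1=\inf\{k:|Y_k|=Q\}$. By the symmetric gambler's ruin estimate $\BbP_0(Y_{\tau_1}=Q)=\BbP_0(Y_{\tau_1}=-Q)=\frac12$, and since $\tau_0=0$ the pathwise identity $\sum_{\overline y\in\BbZ_Q}N_1(\overline y)=\tau_1$ holds. I would treat the two assertions separately: (1) is a distributional symmetry, whereas the common value $\frac{\BbE_0\tau_1}{Q}$ in (2) comes from an explicit occupation-time (Green function) computation.

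For (1) the natural first move is the reflection involution $R:(Y_k)_k\mapsto(-Y_k)_k$, which is $\BbP_0$-measure preserving, leaves $\tau_1$ fixed, interchanges $\{Y_{\tau_1}=Q\}$ and $\{Y_{\tau_1}=-Q\}$, and — because $\overline{-Y_k}=\overline y$ precisely when $\overline{Y_k}=\overline{-y}$ — carries $N_1(\overline y)$ to $N_1(\overline{-y})$. Thus $R$ gives only the \emph{relabelled} identity: the law of $N_1(\overline y)$ given $\{Y_{\tau_1}=Q\}$ equals that of $N_1(\overline{-y})$ given $\{Y_{\tau_1}=-Q\}$. The substance of (1) is to remove the sign change on the residue, i.e.\ to show that $N_1(\overline y)$ is \emph{independent of the exit side} $\mathrm{sign}(Y_{\tau_1})$. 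This is the main obstacle, because no lattice symmetry fixes $\{Y_{\tau_1}=-Q\}$ while sending $\overline y$ to $\overline{-y}$ (the obvious candidate, reflection about $-Q/2$, drives the path below $-Q$).

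To overcome it I would decompose the trajectory at $\ell$, the last visit to $0$ before $\tau_1$: the portion up to $\ell$ consists of excursions from $0$ that return to $0$ while staying in $(-Q,Q)$, and the portion after $\ell$ is a single final excursion from $0$ that reaches $+Q$ or $-Q$ without returning to $0$. By the strong Markov property the returning excursions are independent of the final one, so their contribution $N^{\mathrm{ret}}(\overline y)$ is independent of the exit side, which is fixed by the final excursion alone. It then suffices to prove that the residue-$\overline y$ occupation of the final excursion has the same law whether that excursion is positive or negative. The key lemma is that for an excursion from $0$ to $Q$ confined to $[0,Q]$ the occupation of level $j$ and of level $Q-j$ are equidistributed: reflecting about $Q/2$ \emph{and} reversing time maps such excursions to themselves (both operations preserve the $\BbP_0$-weight) while swapping levels $j$ and $Q-j$. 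Writing $j_+\in\{1,\dots,Q-1\}$ and $j_-=j_+-Q$ for the two representatives of $\overline y$, the plain reflection $j\mapsto-j$ gives $L^{\mathrm{exc},-}(j_-)\overset{\mathrm{d}}{=}L^{\mathrm{exc},+}(Q-j_+)$, and the reflect-and-reverse symmetry gives $L^{\mathrm{exc},+}(Q-j_+)\overset{\mathrm{d}}{=}L^{\mathrm{exc},+}(j_+)$; hence the final occupation at $\overline y$ has the same law on both sides. Convolving with the side-independent $N^{\mathrm{ret}}(\overline y)$ yields the equality of conditional laws in (1).

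For (2) the first of the three equalities is the $\frac12$–$\frac12$ splitting combined with (1). To identify the common value I would compute the expected occupations directly: the Green function of the walk killed on leaving $(-Q,Q)$ is $G(0,j)=\BbE_0[\#\{k<\tau_1:Y_k=j\}]=Q-|j|$ for $|j|<Q$ (the solution of the discrete Poisson problem with Dirichlet data $0$ at $\pm Q$). Folding onto residues gives $\BbE_0 N_1(\overline y)=\sum_{j\equiv\overline y,\,|j|<Q}(Q-|j|)=Q$ for \emph{every} $\overline y$, while $\sum_{j}(Q-|j|)=Q^2=\BbE_0\tau_1$ (equivalently $\BbE_0\tau_1=Q^2$ by optional stopping on the martingale $Y_k^2-k$). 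Hence all three expectations equal $Q=\frac{\BbE_0\tau_1}{Q}$, the uniformity in $\overline y$ being the one point supplied by the Green function rather than by symmetry alone.
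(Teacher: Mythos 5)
Your argument is correct, and it splits naturally into a part that coincides with the paper and a part that is genuinely different. For assertion (1), your two symmetries composed together are exactly the paper's ``modified reflection principle'': the paper keeps the trajectory up to the last visit $R$ of $0$ before $\tau_1$ and replaces the final stretch by $V_t=Y_{\tau_1-(t-R)}-Q$, which is precisely your reflect-about-$Q/2$-and-reverse-time map followed by negation; the paper then simply observes that this pathwise bijection between up-exits and down-exits preserves the occupation measure modulo $Q$ (since the shift by $-Q$ is invisible in $\BbZ_Q$), whereas you factor the same content through an excursion decomposition at the last zero and two separate distributional identities for the final excursion. Your version is a little longer but makes explicit why the naive reflection $Y\mapsto -Y$ is insufficient, which the paper leaves implicit. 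For assertion (2), the routes diverge: the paper obtains the value $\BbE_0\tau_1/Q$ by a renewal--reward argument --- the ergodic theorem for the induced walk $\overline{Y}$ on $\BbZ_Q$ gives $S_n[g]/n\to 1/Q$, the renewal theorem gives $R_n/n\to 1/\BbE_0\tau_1$, and the law of large numbers for the i.i.d.\ blocks $W_k[g]$ identifies the limit with $\BbE_0 N_1(\overline{y})/\BbE_0\tau_1$ --- while you compute the Dirichlet Green function $G(0,j)=Q-|j|$ on $(-Q,Q)$ and fold it modulo $Q$ to get the constant value $Q=\BbE_0\tau_1/Q$ directly. Your computation is more elementary and explicit (it also yields $\BbE_0\tau_1=Q^2$ as a byproduct and shows the uniformity in $\overline{y}$ by a two-term telescoping sum rather than by an asymptotic argument); the paper's soft argument would survive in situations where the Green function is not available in closed form. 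Both proofs are complete; the only point worth tightening in yours is the remark that the last visit to $0$ is not a stopping time, so the independence of the returning excursions from the exit side should be justified by the standard decomposition of the killed path into a geometric number of i.i.d.\ returning excursions followed by an independent exiting one.
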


\begin{proof}
\begin{enumeratec}
\item
Denote by $U:=\{Y_{\tau_1}=Q\}$ and $D:=\{Y_{\tau_1}=-Q\}$ the sets of conditioning.  
Assume that $Y$ is a trajectory in $U$ and define $R:=\max\{t: 0\leq t <\tau_{1}, Y_t=0\}$.
Now, 
between times $0$ and $R$, the path $Y$ wanders around the level $0$. For times $t$ such that $R<t<\tau_{1}$, the path remains strictly confined within the (interior of the) strip.  

For any trajectory $Y$ in $U$, we shall define a new trajectory $V$ --- bijectively determined from $Y$ ---  belonging to $D$ as follows:
\[
V_t= \left\{\begin{array}{lcl}
 Y_{t} & \textrm{for} & 0 \leq t \leq R\\
Y_{\tau_{1}-(t-R)}-Q& \textrm{for} & R\leq t \leq  \tau_{1}.
\end{array}\right.
\]
Obviously, the above construction is a bijection. Hence for trajectories not in $U$ (i.e.\ trajectories in $D$) the modified trajectory is defined inverting the previous transformation. 
The figure \ref{fig:modified-reflection-principle} illustrates the construction (modified reflection principle).
\begin{figure}[h]
\begin{center}
\begin{tikzpicture}[scale=0.2]
\foreach \j in {0, ..., 15} \draw [-, gray!30, very thin] (\j, -6) -- (\j, 6);
 \foreach \j in {-6, ..., 6} \draw [-, gray!30, very thin] (0, \j) -- (16, \j);
 \foreach \j in {-6, 0, 6} \draw [-, thick] (0, \j) -- (16, \j);
\draw [-,red]  (0,0) -- (1,1) -- (2,0) -- (3,-1) -- (4,0) -- (5,1) -- (6,0) -- (7,-1) -- (8,0) -- (9, 1) -- (10,2) -- (11,1) -- (12, 2) -- (13, 3) -- (14,4) -- (15,5) -- (16,6);
\draw (0,0) node [anchor=north] {$0$};
\draw (16,0) node [anchor=north] {$\tau_{1}$};
\draw [-, thin, green] (8, -6) -- (8, 6);
\draw [<-, very thin] (0,3) -- (3, 3);
\draw [->, very thin] (5,3) -- (8, 3);
\draw (4, 2) node [anchor=south] {$R$};
\draw (-0.5,-6) node [anchor=east] {$-Q$};
\draw (-0.5,6) node [anchor=east] {$+Q$};
\draw (-0.5,0) node [anchor=east] {$0$};
\end{tikzpicture}
\hskip1cm
\begin{tikzpicture}[scale=0.2]
\foreach \j in {0, ..., 15} \draw [-, gray!30, very thin] (\j, -6) -- (\j, 6);
 \foreach \j in {-6, ..., 6} \draw [-, gray!30, very thin] (0, \j) -- (16, \j);
 \foreach \j in {-6, 0, 6} \draw [-, thick] (0, \j) -- (16, \j);
\draw [-,red]  (0,0) -- (1,1) -- (2,0) -- (3,-1) -- (4,0) -- (5,1) -- (6,0) -- (7,-1) -- (8,0);
\draw [-,blue] (8,0) -- (9, -1) -- (10,-2) -- (11,-3) -- (12, -4) -- (13, -5) -- (14,-4) -- (15,-5) -- (16,-6);
\draw (0,0) node [anchor=south] {$0$};
\draw (16,0) node [anchor=south] {$\tau_{1}$};
\draw [-, thin, green] (8, -6) -- (8, 6);
\draw [<-, very thin] (0,3) -- (3, 3);
\draw [->, very thin] (5,3) -- (8, 3);
\draw (4, 2) node [anchor=south] {$R$};
\draw (-0.5,-6) node [anchor=east] {$-Q$};
\draw (-0.5,6) node [anchor=east] {$+Q$};
\draw (-0.5,0) node [anchor=east] {$0$};
\end{tikzpicture}
\end{center}
\caption{
\label{fig:modified-reflection-principle}
Illustration of the modified reflexion principle. The left figure depicts a detail of the up-crossing excursion, occurring between times $0$ and $\tau_{1}$. The left figure depicts the  details of a new admissible path bijectively  obtained from $Y$  by defining it as identical to $Y$  for the times $0\leq t \leq R$ and then  by reverting the flow of time and displacing the remaining portion of the path by $-Q$, as explained in the text.}
\end{figure}
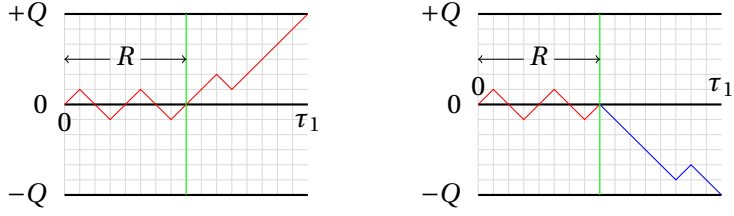

On denoting by $\eta$ the occupation measure of the process $Y$ and  by $\kappa$ the one of $V$, 
we have
$\overline{\kappa}_{\tau_{1}-1}(\overline{y}):=\sum_{i=0}^{\tau_1-1} \id_{\{\overline{y}\}}(\overline{V}_i)=
\sum_{i=0}^{\tau_1-1} \id_{\{\overline{y}\}}(\overline{Y}_i)=:\overline{\eta}_{\tau_{1}-1}(\overline{y})$
by construction of the path $V$. This remark implies that $\overline{\eta}_{\tau_{1}-1}(\cdot)$ and 
$\overline{\kappa}_{\tau_{1}-1}(\cdot)$ have the same law.

\item
Since the random walk $(Y_n)$ is symmetric, the probability of exiting the strip of width $Q$ by up-crossing is the  same as for a down-crossing. 
Hence 
$\BbE_0 N_1(\overline{y})= \frac{1}{2} \BbE_0\left( N_1(\overline{y})\;\middle|\;Y_{\tau_1}=Q\right)+\frac{1}{2} \BbE_0 \left( N_1(\overline{y})\;\middle|\;Y_{\tau_1}=-Q\right)$. This remark, combined with the equality of conditional laws established in 1.\    establishes the leftmost and the central equalities of the statement.

 To prove the rightmost equality, 
let $g:\BbZ\rightarrow \BbR$ be a bounded function and denote by
$S_n[g]= \sum_{k=0}^{n-1} g(Y_k)$. On defining $W_n[g]= \sum_{k=\tau_n}^{\tau_{n+1}-1} g(Y_k)$ and $R_n=\max\{k: \tau_k\leq n\}$, we have the decomposition:
\begin{equation*}
S_n[g]=  \sum_{k=0}^{R_n} W_k[g]-\sum_{k=n}^{\tau_{R_n+1}-1} g(Y_k). 
\end{equation*}

Since $\tau_{R_n+1}-n \leq \tau_{R_n+1}-\tau_{R_n}$, we have, thanks to the boundedness of $g$, that $\frac{1}{n} \left| \sum_{k=n}^{\tau_{R_n+1}-1} g(Y_k)\right| \leq \frac{\tau_{R_{n+1}}-\tau_{R_n}}{n} \sup_{y\in\BbZ} |g(z)|$. 
Since 
\begin{eqnarray*}
\BbP(\tau_{R_{n+1}}-\tau_{R_n}=l) &\leq& \sum_{k=0}^n\BbP(\tau_{{k+1}}-\tau_{k}=l;R_n=k)\\
&\leq&  \sum_{k=0}^n\BbP(\tau_{k+1}-\tau_{k}=l),
\end{eqnarray*}
it follows that for all $\varepsilon >0$, we have $\sum_{k=1}^n\BbP(\tau_{k+1}-\tau_k\geq \varepsilon n)\leq n \BbP(\tau_1\geq \varepsilon n)$  which tends to 0, when $n\rightarrow\infty$, thanks to Markov inequality and the existence of exponential moments for $\tau_1$.

It remains to estimate $\frac{S_n[g]}{n}$ by $\frac{R_n}{n}\frac{1}{R_n}\sum_{k=0}^{R_n} W_k[g]$. Obviously $R_n\rightarrow \infty$ a.s., as $n\rightarrow \infty$, and, by the renewal theorem (see p.\ 221 of \cite{BhattacharyaWaymire2007} for instance), $\frac{R_n}{n}\rightarrow \frac{1}{\BbE_0 \tau_1}$ a.s.
Fix any $\overline{y}\in\BbZ_Q$ and choose $g(z):=\id_{\{\overline{y}\}} (z \mod Q)$. For this $g$, we have $S_n[g]=\overline{\eta}_n(\overline{y})$, where $\overline{\eta} _n(\overline{y})= \sum_{k=0}^{n-1} 
\id_{\{\overline{y}\}} (\overline{Y}_k)$. But $(\overline{Y}_k)$ is a simple random walk on the finite set $\BbZ_Q$ therefore admits a unique invariant probability $\overline{\pi}(\overline{y})= \frac{1}{Q}$. By the ergodic theorem for Markov chains, we have $\frac{S_n[g]}{n}\rightarrow \frac{1}{Q}$ a.s.

Additionally, for this choice of $g$, the sequence $(W_k[g])_{k\in\BbN}$ are independent random variables, identically distributed as $N_1(\overline{y})$. We conclude by applying the law of large numbers  to the ratio $\frac{1}{R_n}\sum_{k=1}^{R_n} W_k[g]$.
\end{enumeratec}
\end{proof}


To prove almost sure recurrence, it is enough to  show $\sum_{k\in\BbN} \BbP_0\left(X_{\sigma_k}=0, Y_{\sigma_k}=0\;\middle|\; \cG\right)=\infty$ a.s. 
If $\beta >1$ then  $\sum_y \BbP(\lambda_y=1)<\infty$; hence,  by Borel-Cantelli lemma,  there is almost surely a finite number of $y$'s such that $\lambda_y=1$, i.e.\ the $\cG$-measurable random variable $l(\omega)=\max \{|y|: \lambda_y=1\}/Q$ is almost surely finite. Fix an integer $L(\omega)\geq l(\omega)+1$, and introduce the $\cF\vee \cG$-measurable random sets:
\begin{eqnarray*}
F_{L,2n}(\omega) &=& \left\{k: 0\leq k \leq 2n-1; |Y_{\tau_{k}(\omega)}(\omega)| \leq L(\omega)Q;   |Y_{\tau_{k+1}(\omega)}(\omega)| \leq L(\omega)Q\right\}\\
G_{L,2n}(\omega) &=& \left\{k: 0\leq k \leq 2n-1; |Y_{\tau_{k}(\omega)}(\omega)| \geq L(\omega)Q;   |Y_{\tau_{k+1}(\omega)}(\omega)| \geq L(\omega)Q\right\}.
\end{eqnarray*}
To simplify notation, we drop  explicit reference to the $\omega$ dependence of those sets.


Denote by $\textsf{Adm}(2n)$ the set of \souligner{admissible paths}   
$\bz=(z_0, z_1, \ldots, z_{2n-1}, z_{2n})\in \BbZ^{2n+1}$  satisfying 
$ |z_{i+1}-z_i|=1$ for $ i=0,\ldots 2n-1$ and $z_0=0$.
For any $\bz\in \textsf{Adm}(2n)$, we denote  $C[\bz]$ the  cylinder set
\[C[\bz]= \left\{\omega\in\Omega:Y_0(\omega)=Qz_0=0, Y_{\tau_1(\omega)}(\omega)= Qz_1, \ldots, Y_{\tau_{2n}(\omega)}(\omega)=Qz_{2n}\right\}\in\cF.\]

Denote by $\theta_k= X_{\tau_{k+1}}-X_{\tau_k}$, for $k\in\{0, \ldots, 2n-1\}$, and observe that 
\[X_{\tau_{2n}}=\sum_{k=0} ^{2n-1} \theta_k= \sum_{k\in F_{L,2n}} \theta_k+
 \sum_{k\in G_{L,2n}} \theta_k,\]
the sums appearing in the above decomposition referring to disjoint excursions.   

\begin{prop}
\label{pro:vanishing-a}
For every $\bz\in \textsf{Adm}(2n)$ and every $k\in  G_{{L,2n}}(\omega)$, with $\omega\in C[\bz]$, 
\[a_k:=\BbE_0\left(\theta_k\middle|C[\bz]; \cG\right)=0.\]
\end{prop}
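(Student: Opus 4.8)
The plan is to integrate out the horizontal waiting times first and then reduce the surviving vertical average to the occupation identity of Lemma \ref{lem:N}. Writing
\[\theta_k=\sum_{y\in\BbZ}\varepsilon_y\sum_{i}\xi_i^{(y)},\]
where the inner sum runs over the visits to the actual level $y$ performed during the $k$-th excursion (i.e.\ during the times $\tau_k\leq t\leq \tau_{k+1}-1$), I would first use that the family $(\xi_i^{(y)})$ lies in $\cH$ and is independent of $\cF\vee\cG$, with common mean $m:=\BbE(\xi_1^{(0)})$. Conditioning on $\cF\vee\cG$ therefore gives $\BbE_0(\theta_k\mid\cF\vee\cG)=m\sum_{y}\varepsilon_y\,n_k(y)$, where $n_k(y)$ denotes the ($\cF$-measurable) number of visits to level $y$ during the $k$-th excursion. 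By the tower property this yields
\[a_k=m\,\BbE_0\Big(\sum_{y\in\BbZ}\varepsilon_y\, n_k(y)\ \Big|\ C[\bz];\cG\Big).\]

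The second step is to check that an excursion indexed by $k\in G_{L,2n}$ sees only the purely periodic part of the environment. Such an excursion starts and ends at levels $Qz_k,Qz_{k+1}$ with $|z_k|,|z_{k+1}|\geq L$, and during $\tau_k\leq t\leq\tau_{k+1}-1$ the skeleton stays within distance $Q-1$ of $Qz_k$; hence every visited level $y$ satisfies $|y|\geq Q|z_k|-(Q-1)\geq QL-(Q-1)\geq Ql+1>lQ=\max\{|y|:\lambda_y=1\}$, so that $\lambda_y=0$ and $\varepsilon_y=f(y)$ there. Grouping levels by their residue class modulo $Q$ and using that the vertical occupation is independent of $\cG$, this turns the previous display into
\[a_k=m\sum_{\overline{y}\in\BbZ_Q}f(\overline{y})\,\BbE_0\big(N_k(\overline{y})\mid C[\bz]\big),\]
where $N_k(\overline{y}):=\sum_{y\equiv\overline{y}}n_k(y)$ is the residue-class occupation of the $k$-th excursion (the quantity of Lemma \ref{lem:N} relative to that excursion).

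Finally I would invoke the strong Markov property: given the endpoints, the excursions are independent, so conditioning on the whole cylinder $C[\bz]$ affects the $k$-th excursion only through the pair $(Qz_k,Qz_{k+1})$. Translating by the multiple $Qz_k$ of $Q$, the conditional law of $N_k(\overline{y})$ under $C[\bz]$ coincides with that of $N_1(\overline{y})$ conditioned on $\{Y_{\tau_1}=Q\}$ or on $\{Y_{\tau_1}=-Q\}$, according to the sign of $z_{k+1}-z_k$. Lemma \ref{lem:N} then gives $\BbE_0(N_k(\overline{y})\mid C[\bz])=\BbE_0\tau_1/Q$ for every residue $\overline{y}$ and for both crossing directions, whence
\[a_k=m\,\frac{\BbE_0\tau_1}{Q}\sum_{\overline{y}\in\BbZ_Q}f(\overline{y})=0,\]
the last sum vanishing by the hypothesis $\sum_{y=1}^Q f(y)=0$.

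The two steps I expect to be delicate are precisely the conditioning reductions. First, the strip-containment estimate must be stated carefully so that $G_{L,2n}$-excursions genuinely avoid all defect levels and the orientations reduce to the deterministic periodic function $f$. Second, one must justify cleanly that conditioning on the entire skeleton cylinder $C[\bz]$ reduces, for a single excursion, exactly to the one-excursion conditioning of Lemma \ref{lem:N}, via the strong Markov property and translation invariance by multiples of $Q$. Once these are secured, the zero-mean hypothesis on $f$ and the residue-uniformity $\BbE_0(N_1(\overline{y})\mid Y_{\tau_1}=\pm Q)=\BbE_0\tau_1/Q$ finish the computation immediately.
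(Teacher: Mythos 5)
Your proposal is correct and follows essentially the same route as the paper's proof: integrate out the geometric waiting times to reduce to the mean occupation of levels, observe that a $G_{L,2n}$-excursion stays at distance at least $QL-(Q-1)>lQ$ from the origin so that $\varepsilon_y=f(y)$ there, reduce the single-excursion conditioning to Lemma \ref{lem:N} via the strong Markov property and translation by $Qz_k$, and conclude from $\sum_{\overline{y}\in\BbZ_Q}f(\overline{y})=0$. If anything, you make explicit the defect-avoidance estimate that the paper uses only implicitly when it writes $f(y)$ in place of $\varepsilon_y$.
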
 
\begin{proof}
Let $\bz$ be an arbitrary admissible path and suppose that 
$k$ corresponds, say,  to an up-crossing (\textit{i.e.}\ $z_{k+1}-z_k=1$) and abbreviate to $z:=z_k$ and $z+1= z_{k+1}$ in  order to simplify notation.  
Since $\bz\in \textsf{Adm}(2n)$, then for all $\omega\in C[\bz]$, the  random times $\tau_1,\ldots, \tau_{2n}$ are \textbf{compatible} with $\bz$, meaning, in particular, that
\[
Y_{\tau_k}=Qz \ \ \textrm{and}\ \ Y_{\tau_{k+1}}  = Qz+Q.
\]

The horizontal increments $(\theta_k)_{k\in G_{L,2n}}$,  conditionally on $C[\bz]$ and $\cG$,  are independent. To simplify notation, introduce the symbol $T:=\tau_{k+1}-\tau_k$; obviously, $T\elaw \tau_1$ conditionally on the starting point; more precisely $\BbP_{Qz_k}(T=t)=\BbP_0(\tau_1=t)$, for all $t\in\BbN$.

We are now in position to complete the proof of the proposition.
\begin{eqnarray*}
a_k&=&
\BbE_0\left(\sum_{y\in\BbZ} f(y)\sum_{i=0}^
{\eta_{\tau_k, \tau_{k+1}-1}(y)} \xi _i ^y 
 \middle | C[\bz]; \cG\right)\\
 &=& 
 \BbE(\xi_0^0)\sum_{y=Qz-Q+1}^{Qz+Q-1} f(\overline{y}) 
 \BbE_{Qz}\left (\eta_{T-1}(y)\middle | Y_{T} =Qz+Q;  C[\bz]; \cG\right) \BbP_{Qz}(Y_{T} =Qz+Q|C[\bz];\cG)
 \\
 &=&
   \BbE(\xi_0^0) \sum_{\overline{y}\in\BbZ_Q} f(\overline{y})
 \BbE_0\left (N_1(\overline{y})\middle | Y_{\tau_1} =Q\right)\\
 &=& \sum_{\overline{y}\in \BbZ_Q} f(\overline{y})  \frac{\BbE_0(\tau_1)}{Q}\BbE(\xi_0^0)\\
 &=&0,
\end{eqnarray*}
where we used  strong Markov property, lemma \ref{lem:N}, and the centring condition $\sum_{\overline{y}\in\BbZ_Q}
f(\overline{y})=0$ to conclude.
\end{proof}
The sampled process $Z_k=\frac{Y_{\tau_k}}{Q}\in\BbZ$ is a standard simple symmetric nearest neighbour random walk on $\BbZ$. For $z\in\BbZ$,  define the occupation measure $\varpi_n(z):=\varpi(\{z\})= \sum_{k=1}^n \id_{\{z\}}(Z_k)$.  
\begin{lemm}
\label{lem:varpi}
Fix $K>0$. For every $\delta>0$ there exists a constant $c>0$ such that for all $n$ sufficiently large,
\[\aP_n=\BbP_0\left(\max_{z:|z|\leq K}\varpi_{2n}(z) > c \sqrt{n}\; \middle \vert\; Z_{2n}=0\right) <\delta.\] 
\end{lemm}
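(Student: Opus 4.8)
The plan is to reduce the statement to a single fixed level and then to control the occupation time of the bridge obtained by conditioning $(Z_k)_{0\le k\le 2n}$ on $Z_{2n}=0$. Since $\{z:|z|\le K\}$ has only $2K+1$ points, a union bound reduces matters to showing that, for each fixed $z$, the quantity $\BbP_0(\varpi_{2n}(z)>c\sqrt n\mid Z_{2n}=0)$ can be made smaller than $\delta/(2K+1)$ by taking $c$ large, uniformly for large $n$. The workhorse will be an \emph{unconditional} tail bound for the local time: there is a universal constant such that $\BbP_0(\varpi_{[0,T]}(z)\ge m)\le \exp(-(m-1)^2/4T)$ for $m$ of order $\sqrt T$, where $\varpi_{[0,T]}(z)=\sum_{k=1}^{T}\id_{\{z\}}(Z_k)$.

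First I would establish this unconditional bound. Writing $\sigma_1<\sigma_2<\cdots$ for the successive visit times of $(Z_k)$ to $z$, the event $\{\varpi_{[0,T]}(z)\ge m\}$ forces the $m-1$ return times $R_1,\dots,R_{m-1}$ between consecutive visits to satisfy $R_1+\cdots+R_{m-1}\le T$. By translation invariance and the strong Markov property the $R_i$ are i.i.d., distributed as the first return time to the origin of a simple random walk, whose generating function is $\BbE_0(s^{R})=1-\sqrt{1-s^2}$. A Chernoff bound with $s=e^{-\lambda}$ gives $\BbP_0(\sum_{i=1}^{m-1}R_i\le T)\le \exp(\lambda T)\,(1-\sqrt{1-e^{-2\lambda}})^{m-1}\le \exp(\lambda T-(m-1)\sqrt\lambda)$ for small $\lambda$, and optimising in $\lambda$ (taking $\sqrt\lambda=(m-1)/2T$) yields the claimed $\exp(-(m-1)^2/4T)$. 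The heavy tail of index $1/2$ of $R$ is exactly what forces $\varpi$ to live on the $\sqrt T$ scale.

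The remaining, and genuinely delicate, point is to transfer this to the bridge, since the naive bound $\BbP_0(\,\cdot\mid Z_{2n}=0)\le \sqrt{\pi n}\,\BbP_0(\cdot)$ loses a factor $\sqrt n$ and is useless. I would split $\varpi_{2n}(z)=\varpi_{[0,n]}(z)+\varpi_{[n,2n]}(z)$, so that $\{\varpi_{2n}(z)>c\sqrt n\}\subseteq\{\varpi_{[0,n]}(z)>\tfrac c2\sqrt n\}\cup\{\varpi_{[n,2n]}(z)>\tfrac c2\sqrt n\}$. For the first half, conditioning by the Markov property at time $n$ and using the local central limit bound $\max_x\BbP_0(Z_n=x)\le C/\sqrt n$ to handle the resulting endpoint constraint, one obtains
\[\BbP_0\!\left(\varpi_{[0,n]}(z)>\tfrac c2\sqrt n,\;Z_{2n}=0\right)\le \frac{C}{\sqrt n}\,\BbP_0\!\left(\varpi_{[0,n]}(z)>\tfrac c2\sqrt n\right).\]
Dividing by $\BbP_0(Z_{2n}=0)\asymp n^{-1/2}$ and invoking the unconditional estimate with $T=n$, $m\approx\tfrac c2\sqrt n$ gives $\BbP_0(\varpi_{[0,n]}(z)>\tfrac c2\sqrt n\mid Z_{2n}=0)\le C'\exp(-c^2/16)+o(1)$. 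For the second half I would use that the bridge law of $(Z_k)$ is invariant under the time reversal $k\mapsto 2n-k$, which maps $\varpi_{[n,2n]}(z)$ onto $\varpi_{[0,n]}(z)$ up to a negligible boundary correction, so the same bound applies.

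Combining the two halves and the union bound over $|z|\le K$ gives $\aP_n\le (2K+1)\cdot 2C'\exp(-c^2/16)+o(1)$; choosing $c$ so large that the first term is below $\delta/2$ and then $n$ large enough to absorb the $o(1)$ yields $\aP_n<\delta$. The main obstacle is precisely the bridge conditioning: one must avoid the lossy $\sqrt n$ factor, and the device of placing the endpoint constraint on the \emph{other} half as a local central limit factor (together with the reversal symmetry of the bridge) is what makes the conditional estimate survive.
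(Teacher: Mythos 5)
Your argument is correct, but it takes a genuinely different route from the paper's. The paper disposes of the lemma with a one-line first-moment bound: it applies the conditional Markov inequality, $\aP_n\leq\sum_{|z|\leq K}\BbE_0(\varpi_{2n}(z)\mid Z_{2n}=0)/\lfloor c\sqrt n\rfloor$, and then computes the bridge local time expectation exactly as $\sum_{k=1}^{2n}P^k(0,z)P^{2n-k}(z,0)/P^{2n}(0,0)=\cO(\sqrt n)$ via the local limit bound $P^l(0,z)\leq C/\sqrt l$ and a Beta-type integral comparison; this gives $\aP_n\leq (2K+1)C/c$, which is $<\delta$ for $c$ large. You instead prove a much stronger \emph{exponential} tail, $\exp(-c^2/16)$ up to constants, by a Chernoff bound on the i.i.d.\ return times to the level $z$ (using the generating function $1-\sqrt{1-s^2}$ of the first return time), and you then correctly identify and solve the real difficulty of your route, namely that the crude conditioning bound $\BbP_0(\cdot\mid Z_{2n}=0)\leq C\sqrt n\,\BbP_0(\cdot)$ destroys a constant-order exponential estimate: splitting the occupation time at time $n$, absorbing the endpoint constraint into a factor $\sup_xP^n(x,0)\leq C/\sqrt n$ on the complementary half, and invoking the reversal invariance of the bridge for the second half is a sound and standard repair. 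The trade-off is clear: the paper's argument is shorter and needs no bridge-transfer machinery because a first moment suffices for the statement (only ``for every $\delta$ there is a $c$'' is needed), while your argument is longer but quantitatively sharper in $c$; since Lemma \ref{lem:varpi} is only used with a fixed $\delta$ and an adapted $c$, the extra strength is not exploited elsewhere in the paper, but nothing in your proof is wrong.
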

\begin{remn}This lemma will be used in the course of the proof by fixing a $\cG$-measurable almost surely finite $K$, while $n$ will tend to infinity. Of course $c=c(K,\delta)$ depends on the choice of $K$ and $\delta$.
\end{remn}
\begin{proof} 
Denote by $m_n =\lfloor c\sqrt{n}\rfloor$.
Then, by conditional Markov inequality,
\begin{eqnarray*}
\aP_n 
\leq \sum_{z=-K}^K \BbP_0\left(\varpi_{2n}(z) > m_n\; \middle \vert\; Z_{2n}=0\right)
\leq  \sum_{z=-K}^K \frac{\BbE_0\left(\varpi_{2n}(z)\; \middle \vert\; Z_{2n}=0\right)}{m_n}.
\end{eqnarray*}
Now
\begin{eqnarray*}
\BbE_0\left(\varpi_{2n}(z) \; \middle \vert\; Z_{2n}=0\right)
&= & \sum_{k=1}^{2n} \BbP_0\left(Z_k=z\; \middle \vert\; Z_{2n}=0\right)\\
&=&  \sum_{k=1}^{2n}\frac{P^k(0,z)P^{2n-k}(z,0)}{P^{2n}(0,0)},
\end{eqnarray*}
where $P^l(0,z)=\BbP_0(Z_l=z)$. 
For all $z\geq0$ and all $l\geq z$, $P^l(0,z)=2^{-l}\binom{l}{\frac{l+z}{2}}$ if $l+z$ is even and 0 otherwise. We majorise $\binom{l}{\frac{l+z}{2}}\leq \binom{l}{\frac{l}{2}}$, when $l$ is even and
$\binom{l}{\frac{l+z}{2}}\leq \binom{l}{\frac{l-1}{2}}$ when $l$ is odd.
Using Stirling's formula, we see that for  all $l$ sufficiently large, the probability 
$P^l(0,z)$ is majorised independently of the parity of $l$  by a term equivalent (for large $l$) to $\frac{1}{\sqrt{l}}$. Consquently by choosing an appropriate constant $C$, the same majorisation holds for the remaining finite set of values of $l$. 
By approximating the sum by an integral, we get finally that 
\[\BbE_0(\varpi(z)|Z_{2n}=0)\leq C\int_0^{2n} \sqrt{\frac{2n}{t(2n-t)}}dt \leq e\sqrt{n}.\]
We conclude that $\aP_n\leq \delta$ provided that $c>\frac{2K+1}{C}$.
\end{proof}


\noindent\textit{Proof of the recurrence statement of theorem  \ref{thm:main}:}

We shall now fix $K=L$. For $\delta\in]0,1[$, let $c=c(K,\delta)$ be as in the previous lemma \ref{lem:varpi}.
From this very same lemma, we have  $\BbP_0\left(\card F_{L,2n} \leq c\sqrt{n}\right)\geq 1-\delta$
on the set $\{Z_{2n}=0\}$. Fix some constant $d$ and define 
\[\textsf{ConsAdm}(L, 2n,d)=\left\{\bz\in\textsf{Adm}(2n): z_{2n}=0; \vert\{k: 0\leq k <2n, |z_k| \leq L; |z_{k+1}\vert\leq L\}|\leq d\sqrt{n}\right\}\] 
the set of \souligner{constrained admissible paths}. 
(Here and in the sequel, we use indistinguishably the symbols  $|A|$ or $\card A$ to denote the cardinality of the discrete set $A$).
On the set $\{Z_{2n}=0\}$, obviously the equality $\{\card F_{L,2n}\leq d\sqrt{n}\}=\cup_{\bz\in\textsf{ConsAdm}(L, 2n, d)} C[\bz]$ holds. 
\begin{eqnarray*}
\BbP_0\left(X_{\tau_{2n}}=0; Y_{\tau_{2n}}=0 \; \middle \vert \; \cG\right)
&=&\BbP_0\left(X_{\tau_{2n}}=0; Z_{2n}=0 \; \middle \vert \; \cG\right)\\
&\geq& \BbP_0\left(X_{\tau_{2n}}=0; Y_{\tau_{2n}}=0; |F_{L,2n}| \leq d\sqrt{n} \; \middle \vert \;\{Z_{2n}=0\}; \cG\right)
\BbP_0(Z_{2n}=0)\\
&=&  \sum_{\bz\in\textsf{ConsAdm}(L, 2n,d)} 
\BbP_0\left(\{X_{\tau_{2n}}=0\}\cap C[\bz] \; \middle \vert \; \{Z_{2n}=0\}; \cG \right)\BbP_0(Z_{2n}=0)\\
&=&  \sum_{\bz\in\textsf{ConsAdm}(L, 2n,d)} 
\BbP_0\left(X_{\tau_{2n}}=0\; \middle \vert \; C[\bz];\cG\right) \BbP_0\left(C[\bz]\;\middle|\;\cG\right).
\end{eqnarray*}
 Now, for any $\bz\in \textsf{ConsAdm}(L, 2n,d)$,
\begin{eqnarray*}
\BbP_0\left(X_{\tau_{2n}}=0\; \middle \vert \; \cG, C[\bz]\right) 
&\geq &  \sum_{|m|\leq d\sqrt{n}}\BbP_0\left(
\sum _{k\in F_{L,2n}} \theta_k=m;  \sum _{k\in G_{L,2n}} \theta_k =-m\; \middle \vert \; \cG, C[\bz]\right)\\
&= &  \sum_{|m|\leq d\sqrt{n}} \BbP_0\left(
\sum _{k\in F_{L,2n}} \theta_k=m \; \middle \vert \; \cG, C[\bz]\right) 
 \BbP_0\left( \sum _{k\in G_{L,2n}} \theta_k =-m\; \middle \vert \;  \cG, C[\bz]\right).
\end{eqnarray*}
The joint probability  factors into the terms appearing in  the last line  because the $\cG$-measurable set-valued random variables $G_{L,2n}$ and $F_{L,2n}$ take disjoint values, hence the terms in $F_{L,2n}$ and $G_{L,2n}$ refer to different excursions of the random walk $Y$. Independence follows as a consequence of the strong Markov property.

By the proposition \ref{pro:vanishing-a}, we have $\BbE(\theta_k|C[\bz],\cG)=0$.
The  variables $(\theta_k)_{k\in G_{L,2n}}$ are independent and identically distributed conditionally to $\cG$ and $C[\bz]$; their common variance, $\sigma^2$, is finite because, 
\begin{eqnarray*}
\sigma^2 &=&\BbE_0(\theta_k^2|\cG, C[\bz])
\ = \ 
\BbE_{Qz_k} \left( \left[\sum _{y}\varepsilon_y \sum _{i=0} ^{\eta_{\tau_k,\tau_{k+1}-1}(y)} \xi^y_i\right]^2 \middle\vert\; \cG\right)\\
&\leq & \BbE_0(\tau_1) \BbE((\xi_0^0)^2) +\BbE_0(\tau_1^2)[\BbE(\xi_0^0)]^2+ 
[\BbE_0(\tau_1)]^2[\BbE (\xi_0^0)]^2
\ < \ \infty,
\end{eqnarray*}
where we have used  strong Markov property to bound the 
last term of the first line in the previous  formula by the second line.

 For $\bz\in\textsf{ConsAdm}(L,2n,d)$, we have further --- on $C[\bz]$ --- that  $2n- d\sqrt{n}\leq |G_{L,2n}|\leq 2n$. 
 Hence, for $|m|\leq d\sqrt{n}$, we can apply local limit theorem (see proposition 52.12, p.\ 706 of \cite{Port1994}  for instance),  reading   
\[\BbP_0\left(\sum_{k\in G_{L,2n}}\theta_k=-m\;\middle\vert\; \cG, C[\bz]\right)\geq \frac{c_{1}}{\sqrt{|G_{L,2n}|\sigma^2}} \exp\left(-\frac{m^2}{2|G_{L,2n}|\sigma^2}\right),\]
to obtain
$\BbP_0\left(\sum_{k\in G_{L,2n}}\theta_k=-m\; \middle \vert \;\cG, C[\bz]\right)\geq \frac{c_{2}}{\sqrt{n}}$, uniformly in $\bz$.
We can summarise the estimate obtained so far 
 \begin{eqnarray*}
 \BbP_0\left(X_{\tau_{2n}}=0, Y_{\tau_{2n}}=0\;\middle|\; \cG\right)&\geq& 
\frac{c_3}{\sqrt{n}}\sum_{\bz\in\textsf{ConsAdm}(L, 2n,d)}
\BbP_0(C[\bz]|\cG) 
\BbP_0\left(\left|\sum_{k\in F_{L,2n}}\theta_k\right|\leq d\sqrt{n}\;\middle |\;C[\bz];\cG\right).
\end{eqnarray*}
Now, $\{|\sum_{k\in F_{L,2n}} \theta_k|\leq d\sqrt{n}\}\supseteq 
\{\sum_{k\in F_{L,2n}} |\theta_k|\leq d\sqrt{n}\}\supseteq
\{\sum_{k\in F_{L,2n}} \Theta_k\leq d\sqrt{n}\}$, where
$\Theta_k =\sum_y\sum_{i=\eta_{\tau_k(y)}}^{ \eta_{\tau_{k+1}-1}(y)}\xi^y_i$ are i.i.d.\ conditionally on $C[\bz]$, with finite mean $0\leq \mu=\BbE \Theta_k=\BbE(\xi_0^0)\BbE(T_k)<\infty$ and variance $0\leq\sigma^2=\Var\Theta_k<\infty$,
where $T_k=\tau_{k+1}-\tau_k$ is the time needed for the vertical random walk to cross the strip bounded by $z_k$ and $z_{k+1}$.

Additionally, $\lim_{n\rightarrow\infty} |F_{L,2n}| =\infty$ a.s., due to the recurrence of the simple symmetric vertical random walk $(Y_k)$.
From the weak law of large numbers, it follows that for all $\varepsilon >0$ 
\[\lim_{n\rightarrow \infty} \BbP_0\left(\left|\frac{\sum_{k\in F_{L,2n}} \Theta_k}{|F_{L,2n}|}-\mu\right | \leq \varepsilon \right)=1,\]
hence  for all  $\alpha\in ]0,1[$ and sufficiently large $n$,  $\BbP_0(|\frac{\sum_{k\in F_{L,2n}} \Theta_k}{|F_{L,2n}|}-\mu| \leq \varepsilon )\geq \alpha$.
 Since, for $\bz\in \textsf{ConsAdm}(L, 2n,d)$ we have 
 $|F_{L,2n}|\leq d\sqrt{n}$, we conclude that  for  all $n$ sufficiently large, 
 $\BbP_0(|\sum_{k\in F_{L,2n}} \theta_k|\leq d'\sqrt{n}|\cG)>\alpha$, with any $d'>\mu d$.
 Finally, for $n$ sufficiently large, 
 \[\sum_{\bz\in\textsf{ConsAdm}(L, 2n,d)}
\BbP_0(C[\bz]|\cG)=\BbP_0\left(\sum_{z:|z|\leq L+1}\varpi(z)\leq d\sqrt{n}\;\middle|\;Z_{2n}=0\right)\BbP_0(Z_{2n}=0) \geq (1-\delta)\frac{c_3}{\sqrt{n}}\]
 from lemma \ref{lem:varpi}, where $Z_k=Y_{\tau_k}$.
 This concludes the proof of the recurrence.
\eproof

\noindent\textit{Proof of the proposition \ref{pro:deterministic}:}
Since $\|\bom{\lambda}\|<\infty$, it follows that there is a positive integer $l$ such that
$\lambda_y=0$ for all $y$ with $|y|>l$. Choosing then an integer $L> [l/Q]+1$ in the above proof of the recurrence part, we immediately conclude.
\eproof



\section{Conclusion, open problems, and further developments}

\label{sec:open}
As was apparent in the course of the proof of recurrence, the condition $\beta>1$ is used only to show that there are almost surely finitely many lines where the periodicity imposed by $f$ is perturbed by a random defect. Therefore this condition  can be improved. 
\cut{For instance, we can show that if the decay is of the form $\frac{c}{|y| \ln ^{\beta_1} |y|}$, with $\beta_1>1$ or  $\frac{c}{|y| \ln |y| \ln\ln ^{\beta_2} |y|}$, with $\beta_2>1$, etc., then the random walk is still recurrent.} As a matter of fact, the walk is recurrent provided that there exists an arbitrarily large integer $l$ such that the decay is of the form $c(|y|\ln|y|\cdots \ln_{l-1} |y|\ln_l^{\beta_l} |y|)^{-1}$ for some $\beta_l>1$ (arbitrarily close to 1), where $\ln_l$ is the $l$-times iterated logarithm.   Nevertheless, our methods do not allow the treatment of the really critical  case $\beta_0=1$. 

It is easy to build up examples in which the random walk is recurrent although there are infinitely many defects, provided they are sparse. The following deterministic construction illustrates this fact.
Let $(a_n)_{n\in\BbN}$ be an increasing sequence of positive numbers such that $a_n\rightarrow \infty$. For an arbitrary $\{0,1\}$-valued sequence $\bom{\lambda}=(\lambda_y)_{y\in\BbZ}$ we denote by  $\bom{\lambda}\rt{k}$ its  restriction to $\{-k,\ldots, k\}$ (meaning that $(\lambda\rt{k})_y=\lambda_y$ if $|y|\leq k$ and vanishes otherwise) for $k\in\BbN$; define also $\bom{\lambda}\rt{\infty}\equiv \bom{\lambda}$. Obviously $\|\bom{\lambda}\rt{k}\|=\card\{y\in\BbZ: |y|\leq k, \lambda_y=1\}$. For a given sequence $\bom{\lambda}$, we write $\BbP[\bom{\lambda}]_0(\cdot)$ for the probability measure corresponding to the environment $\bom{\lambda}$. 
We shall construct iteratively an infinite deterministic sequence $\bom{\lambda}=(\lambda_y)_{y\in\BbZ}$ of defects perturbing the periodic sequence determined by $f$ so that the corresponding random walk is recurrent.  The first defect is inserted at level 0, i.e.\ we initialise the sequence to $\lambda^{(1)}_y=\delta_{y,0}$ so that $\|\bom{\lambda}^{(1)}\|=1$.  Now the random walk with only one defect in the whole vertical axis is recurrent, meaning that 
\[\sum_{n\in\BbN} \BbP[\bom{\lambda}^{(1)}]_0(\bM_n=(0,0))=\infty.\] Therefore there exists a positive integer $L_1>0$ such that 
\[\sum_{n=1}^{L_1} \BbP[\bom{\lambda}^{(1)}]_0(\bM_n=(0,0))\geq a_1.\] But since in time $L_1$, the vertical random walk cannot be further than $L_1$ from 0, nothing changes if instead of choosing the sequence $\bom{\lambda}^{(1)}$ as above with $ \|\bom{\lambda}^{(1)}\|=1$, we chose \textit{any other} sequence $\bom{\lambda}'$ with  $\lambda'_0=\lambda^{(1)}_0=1$ and $ \|\bom{\lambda}'\rt{L_1}\|)=1$ in the above formula. The second defect is inserted at level $L_1+1$, i.e.\ we modify the sequence into a the sequence
$\bom{\lambda}^{(2)}$
 verifying $\lambda^{(2)}_{y}=\delta_{y,0}+\delta_{y,L_1+1}$,  $\bom{\lambda}^{(2)}\rt{L_1}=\bom{\lambda}^{(1)}$, and $\|\bom{\lambda}^{(2)}\|=2$.
 Again, we can choose a positive integer $L_2>L_1$ such that 
$\sum_{n=1}^{L_2} \BbP[\bom{\lambda}^{(2)}]_0(\bM_n=(0,0))\geq a_2$, and so on. We construct in that way a deterministic sequence 
\[\bom{\lambda}=\lim_{k\rightarrow\infty} \bom{\lambda}^{(k)}
\ \textrm{such that}\  \lambda_y=\delta_{y,0}+\sum_{k=1}^\infty
\delta_{y,L_k+1} \ \textrm{for}\ y\in\BbZ,\] 
verifying  (by construction) $\|\bom{\lambda}\|=\infty$ and $\|\bom{\lambda}\rt{L_k}\|=k$ for all $k$.
For every $k\in\BbN$, during its first $L_k$ steps, the random walk can reach no more than the $k$ first defects and for  $n\leq L_k$, we have therefore $\BbP[\bom{\lambda}]_0(\bM_n=(0,0))=\BbP[\bom{\lambda}\rt{L_k}]_0(\bM_n=(0,0))$. Hence
\begin{eqnarray*}
\sum_{n=0}^\infty \BbP[\bom{\lambda}]_0(\bM_n=(0,0))&\geq&\sum_{n=0}^{L_k} 
\BbP[\bom{\lambda}]_0(\bM_n=(0,0))\\
&=&\sum_{n=0}^{L_k} \BbP[\bom{\lambda}\rt{L_k}]_0(\bM_n=(0,0))\\
&\geq& a_k.
\end{eqnarray*}
Since $k$ is arbitrary, this implies recurrence. 

\cut{
Another interesting question is what happens in more general lattices, like the hexagonal. Since hexagonal lattice can be deformed to be presented as below, we can define random horizontal orientations and ask what will be the type of the walk in this random environment.

\begin{center}
\begin{tikzpicture}
\foreach \y in {-1.5,-0.5, 0, 0.5, 1}  \draw [>->] (-1.7,\y) -- (1.7,\y);
\foreach \y in {-1,  1.5}  \draw [<-<] (-1.7,\y) -- (1.7,\y);
\foreach \y in {-1.5, -0.5, 0.5}  \foreach \x in {-1.5, -0.5, 0.5, 1.5} \draw (\x, \y) -- (\x, \y+0.5);
\foreach \y in {-1.5, -0.5, 0.5}  \foreach \x in {-1.5, -0.5, 0.5} \draw (\x+0.5, \y+0.5) -- (\x+0.5, \y+1);
\end{tikzpicture}
\end{center}
Here the vertical and horizontal components of the random walk no longer factor out completely
as was the case in the square lattice.

Regular 
(undirected) lattices correspond to  Cayley graphs of finitely generated groups $\Gamma$. More precisely, let $\Gamma$ be a finitely generated group, not necessarily Abelian, and $S_\Gamma$ 
a finite symmetric set of generators of $\Gamma$. Then the \textbf{Cayley graph} is the infinite graph
$\textsf{Cayley}(\Gamma, S_\Gamma)=(\BbG^{0},\BbG^{1},s, t)$ with
 $\BbG^{0}=\Gamma$ and $(u,v)\in \BbG^{1}\Leftrightarrow u^{-1} v \in S_\Gamma$. This graph is necessarily undirected and the most prominent examples are the Abelian graphs  $\BbZ^d$ with some integer $d\geq 1$, the homogeneous tree with $d$ free generators $\BbF_d$, etc. 
The construction of the graph can be seen as a recursive nested construction $(\BbG^{0}_n)_{n\in\BbN}$ with $\BbG^{0}_n\subset \BbG^{0}_{n+1}$ for all $n\geq 1$: let $\gamma_0\in $ be some fixed element of $\Gamma$, for instance the neutral element,  identified as a particular vertex of the graph,  and assume $\BbG^{0}_0= \{\gamma_0\}$ be the germ set. Then adjacent vertices are adjoined to get the recursive sequence of sets $\BbG^{0}_{n+1}= \{\gamma s: \gamma\in \BbG^{0}_n, s\in S\}\cup \BbG_n^0$. Now, this construction can be generalised by introducing a \textbf{selection mapping} $F:\Gamma\times S\rightarrow \{0,1\}$; new vertices of the form $\gamma s$, with $s\in S$, adjacent to $\gamma$ can be added, solely\footnote{The Cayley graph case corresponds to the trivial function $F\equiv 1$.} if $F(\gamma,s)=1$. The generated combinatorial object is not any longer a group but merely a groupoid or a semi-groupoid. These constructions occur in a multitude of applications. For instance Penrose lattices obtained from the cut-and-project  method enter into the above groupoid category (diffusive properties of random walks on these lattices are studied in \cite{Telcs2010}, type problems in \cite{Loynes2012c}).
Directed lattices considered in this paper fall into the semi-groupoid one, random graphs are also of the groupoid or semi-groupoid class. The algebraic object  
supporting these (semi)-groupoids are $C^*$-algebras. Therefore, there are interesting counterparts, not yet fully exploited, of the graphs we consider here and various natural objects like Penrose lattices,  Cuntz-Krieger algebras, wavelet cascades, quantum channels, etc. Several of those extensions towards semi-groupoids, quantum channels, and $C^*$-algebras are currently under investigation.
}

\noindent{\textbf{Acknowledgements:}} The authors should like to thank the referee for the very  careful reading and the numerous remarks that helped them improving the presentation of the paper and correcting an erroneous statement in the proof of recurrence.

\bibliographystyle{plain}
\scriptsize{
\bibliography{petritis,complete-bibliography}
 }            

\end{document}